\newcommand{\tb}[1]{\textbf{#1}}
\renewcommand{\it}[1]{$\mathit{#1}$}
\newcommand{\pc}[1]{\left[ #1 \right]}
\newcommand{\pl}[1]{\left\{ #1 \right \}}
\newtheorem{theorem}{Theorem}[section]
\newtheorem{example}[theorem]{Example}
\newtheorem{lemma}[theorem]{Lemma}
\newtheorem{proposition}[theorem]{Proposition}
\newtheorem{remark}[theorem]{Remark}
\newenvironment{proof}[1][Proof]{\noindent \emph{#1.} }{\hfill \ 
\rule{0.5em}{0.5em}}
\makeatletter\@addtoreset{equation}{section}\makeatother
\makeatletter\@addtoreset{figure}{section}\makeatother
\makeatletter\@addtoreset{table}{section}\makeatother
\begin{document}
\title{Grid-based  lattice summation of electrostatic  potentials
by assembled rank-structured tensor approximation
}

\author{Venera Khoromskaia\thanks{Max-Planck-Institute for
        Mathematics in the Sciences, Inselstr.~22-26, D-04103 Leipzig,
        Germany ({\tt vekh@mis.mpg.de}).} \and
        Boris N. Khoromskij\thanks{Max-Planck-Institute for
        Mathematics in the Sciences, Inselstr.~22-26, D-04103 Leipzig,
        Germany ({\tt bokh@mis.mpg.de}).}
        }
 
\date{}

\maketitle

\begin{abstract}
Our recent method for low-rank tensor representation of sums of the arbitrarily 
positioned electrostatic potentials discretized on a 3D Cartesian grid 
reduces the 3D tensor summation to operations involving only 1D vectors however retaining
the linear complexity scaling in the number of potentials.
Here, we introduce and study a novel tensor approach for fast 
and accurate assembled summation of a large number of lattice-allocated potentials
represented on 3D $N\times N \times N$ grid with the computational requirements only
\emph{weakly dependent} on the number of summed potentials. 
It is based on the assembled low-rank canonical tensor representations of 
the collected potentials using pointwise sums of shifted canonical vectors 
representing the single generating function, say the Newton kernel.
For a sum of electrostatic potentials over $L\times L \times L$ lattice 
embedded in a box the required storage scales linearly in the 1D grid-size, 
$O(N )$, while the numerical cost is estimated by $O(N L)$.  
For periodic boundary conditions, the storage demand remains proportional to the 1D grid-size 
of a unit cell,   $n=N/L$,  while the numerical cost reduces to $O(N)$, that  outperforms  
the FFT-based Ewald-type summation algorithms of complexity $O(N^3 \log N)$.
The complexity in the grid parameter $N$ can be reduced even
to the logarithmic scale $O(\log N)$ by using data-sparse representation of canonical $N$-vectors
via the quantics tensor approximation.
For justification, we prove an upper bound on the quantics ranks for the 
canonical vectors in the overall lattice sum. 
 The presented approach is beneficial in applications which require further functional calculus
 with the lattice potential, say, scalar product with a function, integration or differentiation,
which can be  performed easily in tensor arithmetics on large 3D grids with 1D cost.
Numerical tests illustrate the performance of the tensor summation method
and confirm the estimated bounds on the tensor ranks.

\emph{This paper is an essentially improved version of the preprint of the Max-Planck 
Institute for Mathematics in the Sciences \cite{VeKhorEwald:13}.}
\end{abstract}

\noindent\emph{AMS Subject Classification:}\textit{ } 65F30, 65F50, 65N35, 65F10

\noindent\emph{Key words:}  Lattice sums, periodic systems, Ewald summation, 
tensor numerical methods, canonical tensor decomposition, quantics tensor approximation,
Hartree-Fock equation, Coulomb potential,  molecular dynamics.

\section{Introduction}\label{sec:introduct}

There are several challenges in the numerical treatment of periodic and perturbed 
periodic systems in quantum chemical computations for crystalline, metallic and polymer-type 
compounds, see \cite{CRYSTAL:2000,CRYSCOR:12,KuScuser:04} and 
\cite{VolUsSchDePau:11,LoUsvyatSch_P1:11,CanEhrMad:2012,LSJ:2010}.
One of them is the lattice summation of electrostatic potentials 
of a large number of nuclei 
distributed on a fine 3D computational grid.  
This problem is also considered to be a demanding computational task
in the numerical treatment of long-range electrostatic interactions in molecular dynamics simulations 
of  large solvated biological systems \cite{TouBo_Ewald:96,Hune_Ewald:99,DesHolm:98}. 
In the latter applications the efficient calculation of quantities like potential energy function 
or interparticle forces remains to be of  main interest.

Tracing back to Ewald summation techniques \cite{Ewald:27},
the development of lattice-sum methods in numerical simulation of 
particle interactions in large molecular systems has led to established 
algorithms for evaluating long-range electrostatic potentials of multiparticle systems,
see for example \cite{DYP:93,PolGlo:96,TouBo_Ewald:96,Hune_Ewald:99,DesHolm:98,LinTorn2:12} 
and references therein. 
These methods usually combine the original Ewald summation approach 
with the Fast Fourier Transform (FFT) or fast multipole methods \cite{RochGreen:87}. 

The Ewald summation techniques were shown to be particularly attractive for
computation of the potential energies and forces of many-particle systems with 
long-range interaction potential in periodic boundary conditions.
They are based on the spacial separation of a sum of potentials into two parts,
the short-range part treated in the real space, and the long-range part
whose sum converges in the reciprocal space. The fast multipole method is 
used for more unstructuredly distributed  potentials, where the interactions between 
closely positioned potentials are calculated directly, and the distant
interactions are calculated by using the hierarchical clusters.

Here we propose the new approach for   calculation of lattice sums  
based on the assembled low-rank tensor-product approximation of the electrostatic potentials 
(shifted Newton kernels) discretized on large $N\times N\times N$ 3D Cartesian grid. 
A sum of potentials is represented on the 3D uniform grid in the whole computational box, 
  as a low-rank tensor with storage $O(N)$, containing tensor-products of vectors 
of a special form. This remarkable approach is  initiated by  our
former numerical observations in \cite{KhKh:06,VeKh_Diss:10} that the Tucker tensor rank 
of the 3D lattice sum of discretized Slater functions remains uniformly bounded, 
nearly independent  of the number of single Slater functions in a sum. 

As a building block we use the separable tensor-product representation (approximation) of 
a single Newton kernel $\frac{1}{r}$ in a given computational box, which provides the electrostatic
potential  at any point of an $N\times N\times N$-grid, but needs only $O(N)$ storage
due to the canonical tensor format (\ref{CP_form}). This algorithm, introduced in \cite{BeHaKh:08},
includes presentation of the potential in a form of a weighted sum of Gaussians obtained 
by the sinc-quadrature approximation to the
integral Laplace transform of the kernel function $\frac{1}{r}$, \cite{Braess:95,HaKhtens:04I}. 
Then by shifting and summation of the single canonical tensors one can construct a sum of 
electrostatic potentials located at the positions of nuclei in a molecule.
This scheme of \it{direct} tensor summation was introduced in \cite{KhorVBAndrae:11,VKH_solver:13} 
for calculation of the one-electron integrals in the black-box Hartree-Fock solver by 
grid-based tensor numerical methods\footnote{The accuracy of tensor-based calculations 
is close to accuracy of benchmark Hartree-Fock packages based on
analytical evaluation of the corresponding integrals \cite{KhorVBAndrae:11,VKH_solver:13}.}. 
It is well suited for the case of arbitrary positions of potentials, like for example 
nuclei in a molecule, however the rank of the resulting canonical tensor is approximately 
proportional to a number of summed potentials.

In this paper we introduce a novel grid-based \it{assembled} tensor summation method which matches well
for lattice-type and periodic molecular systems and yields enormous reduction in storage 
and time of calculations.
The resulting canonical tensor representing the total sum of a large number of potentials
contains the same number of canonical vectors as a tensor for a single potential. 
However, these  vectors have another content: they collect
the whole data from the 3D lattice by capturing the periodic shape
of the total 3D potential sum (represented on the grid) onto a few 1D canonican vectors,
as it is shown in Figures \ref{fig:3DPeriod_Dm8} 
and \ref{fig:3DStructCanVect} in 
sections \S  \ref{ssec:nuclear_extend} and \ref{ssec:TensorSumPeriod}.
This agglomeration is performed in a simple algebraic way by pointwise
sums of   shifted canonical vectors representing the generating function, e.g.
$\frac{1}{r}$. The presented numerical calculations confirm that the difference between the
total potentials obtained by an assembled tensor-product sum  and by a direct 
canonical sum is close to machine precision, see Figure \ref{fig:Ewald_comp}.

Thus, the adaptive global  decomposition of a sum of interacting potentials 
can be computed with a high accuracy, and in a completely algebraic way. 
The resultant potential is represented simultaneously on the fine 3D 
Cartesian grid in the whole computational box, both in the framework 
of a finite lattice-type cluster, or of a supercell in a periodic setting. 
The corresponding rank bounds for the tensor representation of the sums
of potentials are proven. Our grid-based tensor approach 
is beneficial in applications requiring further functional calculus
with the lattice potential sums, for example, interpolation, scalar product with a function, 
integration or differentiation (computation of energies or forces),
which can be  performed  on large 3D grids  using tensor arithmetics of sub-linear cost 
\cite{VeKh_Diss:10,KhKh3:08} (see Appendix).
This advantage makes the tensor method promising in electronic structure calculations, 
for example, in computation of  projections of the sum of electrostatic potentials 
onto some basis sets like  molecular or atomic Gaussian-type orbitals.

In the case of a $L\times L \times L$ lattice cluster in a box the storage size 
is shown to be bounded by $O(L)$, while the summation cost is estimated by $O(N L)$. 
The latter can be  reduced to the logarithmic scaling in the grid size, $O(L \log N )$, 
by using the quantized approximation  of long canonical vectors 
(QTT approximation method  \cite{KhQuant:09}, see Appendix). 
For a lattice cluster in a box both the fast multipole, FFT as well as the so-called $P^3 M$
methods if applicable scale at least  linear-logarithmic in the number of 
particles/nuclear charges on a lattice, $O(L^3 \log L)$, see \cite{PolGlo:96,DesHolm:98}.

For periodic boundary conditions, the respective lattice summations are reduced
to 1D operations on short canonical vectors of size $n = N/L$, 
being the restriction (projection) of the global $N$-vectors onto the unit cell.
Here $n$ denotes merely the number of grid points per unit cell.
In this case, storage and computational costs are reduced to $O(n)$ and $O(L n)$,
respectively, while the traditional FFT-based approach scales at least cubically 
in $L$, $O(L^3 \log L)$. Due to low cost of the tensor method in the limit of large 
 lattice size $L$,  the conditionally convergent sums in periodic setting can be 
regularized by subtraction of the constant term  which can be evaluated numerically 
 by the Richardson extrapolation on a sequence of lattice parameters $L, 2L, 4L$ etc. 
(see \S\ref{ssec:TensorSumPeriod}).
Hence, in the new framework the analytic treatment of the conditionally convergent sums
is no longer required.

It is worth to note that the presented tensor method is applicable to the lattice 
sums of rather general interaction potentials which allow an efficient 
local-plus-separable approximation.
In particular, along with Coulombic systems, it can be applied to a wide class of commonly 
used interaction potentials, for example, to the Slater, Yukawa, Stokeslet, Lennard-Jones or 
van der Waals interactions. 
In all these cases the existence of low-rank grid-based tensor approximation
can be proved and this approximation can be constructed numerically by analytic-algebraic methods 
as in the case of the Newton kernel.   
Our tensor approach can be extended to slightly perturbed periodic systems, for example,
to the case of few vacancies in the spacial distribution of electrostatic potentials, or
a small perturbation in positions of electron charges and other defects.
The more detailed discussion of these issues is beyond the scope of the present paper,
and is the topic of forthcoming papers.

Notice that the tensor numerical methods are  now recognized as the a powerful tool for 
solution of multidimensional partial differential equations (PDEs) discretized by 
traditional grid-based schemes. Originating from the DMRG-based matrix product states 
decomposition in quantum physics and chemistry \cite{White:93}  
and coupled with tensor multilinear algebra \cite{Kolda,KhorSurv:10,GraKresTo:13}, 
the approach was recently developed to the new branch of numerical analysis, 
tensor numerical methods, providing efficient algorithms for solving multidimensional PDEs 
with linear complexity scaling in the dimension \cite{KhorLecZuri:10}. 
One of the first steps in development of tensor numerical methods
was the 3D grid-based tensor-structured method for solution of the nonlinear Hartree-Fock 
equation \cite{Khor1:08,KhKhFl_Hart:09,VeKh_Diss:10,VKH_solver:13}
based on the efficient algorithms for the grid-based calculation of the 3D convolution 
integral operators in 1D complexity.

The reminder of the paper is structured as follows. In Section \ref{sec:NewtTens}  
we recall the   low-rank approximation to the single Newton kernel 
(electrostatic potential)
in the canonical tensor format and direct tensor calculation of the total potential 
sum of arbitrarily positioned potentials in a box. 
Section \ref{sec:BlockStrCoreHam} presents the main results of this paper 
describing the assembled low-rank tensor summation of potentials on a lattice 
in a bounded rectangular box, as well as  in the periodic setting.
The storage estimates and complexity analysis are provided.
We give numerical illustrations to the structure of assembled canonical vectors and
the results on accuracy and times of tensor summations over large 3D lattice. 
In Section \ref{sec:QTT-Sum}, we prove  the low QTT-rank approximation of the canonical vectors
in the lattice sum of the Newton kernels that justifies the logarithmic (in the grid size $N$)
storage cost of the tensor summation scheme.
Section \ref{sec:Conclusions} concludes the paper. 
For the readers convenience,
Appendix describes the basic notions of rank-structured tensor formats.

\section{Tensor decomposition of the electrostatic potential} \label{sec:NewtTens}

\subsection{Grid-based canonical representation of the Newton kernel} 
\label{ssec:CoulombUnit}

Methods of separable approximation to the 3D Newton kernel (electrostatic potential) 
using the Gaussian sums have been addressed in the chemical and mathematical literature 
since \cite{Boys:56} and \cite{Braess:BookApTh,Braess:95}, respectively.

In this section, we briefly recall the grid-based method for the low-rank tensor 
representation of the 3D Newton kernel $\frac{1}{\|x\|}$
by its projection onto the set
of piecewise constant basis functions, see \cite{Khor1:08} for more details.
Based on the results in \cite{GHK:05,HaKhtens:04I,BeHaKh:08},
this approximation can be proven to converge almost exponentially in the rank parameter. 
For the readers convenience, we now recall the main 
ingredients of this tensor approximation scheme \cite{BeHaKh:08}.


In the computational domain  $\Omega=[-b/2,b/2]^3$, 
let us introduce the uniform $n \times n \times n$ rectangular Cartesian grid $\Omega_{n}$
with the mesh size $h=b/n$.
Let $\{ \psi_\textbf{i}\}$ be a set of tensor-product piecewise constant basis functions,
$  \psi_\textbf{i}(\textbf{x})=\prod_{\ell=1}^d \psi_{i_\ell}^{(\ell)}(x_\ell)$
for the $3$-tuple index $\tb{i}=(i_1,i_2,i_3)$, $i_\ell \in I=\pl{1,...,n}$, $\ell=1,\, 2,\, 3 $.
The Newton kernel can be discretized by its projection onto the basis set $\{ \psi_\textbf{i}\}$
in the form of a third order tensor of size $n\times n \times n$, given point-wise as
\begin{eqnarray}
\mathbf{P}:=\pc{p_\tb{i}} \in \mathbb{R}^{n\times n \times n},  \quad
 p_\tb{i} = 
\int_{\mathbb{R}^3}
  \frac{\psi_{\tb{i}}(\textbf{x})}{\|\mathbf{x}\|} \,\, \mathrm{d}\tb{x}.
  \label{galten}
\end{eqnarray}
The low-rank canonical decomposition of the 3D tensor $\mathbf{P}$ is based on using exponentially convergent 
$\operatorname*{sinc}$-quadratures for approximation of the Laplace-Gauss transform to the Newton kernel,
\begin{align} \label{laplace} 
\frac{1}{\|\mathbf{x}\|}  & =  \frac{1}{\sqrt{\pi}} \int_{\mathbb{R}} 
e^{- t^2\|\mathbf{x}\|^2} \,\mathrm{d}t  = 
\frac{1}{\sqrt{\pi}} \int_{\mathbb{R}} \prod_{\ell=1}^3 e^{-t^2 (x_\ell)^2} \,\mathrm{d}t , 
\quad \|\mathbf{x}\| > 0  .
\end{align} 
 Following \cite{HaKhtens:04I,BeHaKh:08}, for any fixed $x\in \mathbb{R}^3$,
we apply the $\operatorname*{sinc}$-quadrature approximation
\begin{equation} \label{eqn:sinc_Newt}
 \frac{1}{\|\mathbf{x}\|} =  \frac{1}{\sqrt{\pi}} \int_{\mathbb{R}} 
e^{- t^2\|\mathbf{x}\|^2} \,\mathrm{d}t \approx 
\sum_{k=-M}^{M} g_k e^{- t_k^2\|\mathbf{x}\|^2} \quad \mbox{for} \quad \|\mathbf{x}\| > 0,
\end{equation}
where the quadrature points and weights are given by 
\begin{equation} \label{eqn:hM}
t_k=k \mathfrak{h}_M , \ g_k=\mathfrak{h}_M , \ \mathfrak{h}_M=C_0 \log(M)/M , \ C_0>0.
\end{equation}
Under the assumption $0< a \leq \|\mathbf{x}\| \leq A < \infty$
this quadrature provides the exponential convergence rate in $M$,
\begin{equation*} \label{sinc_conv}
\left|\frac{1}{\|\mathbf{x}\|} - \sum_{k=-M}^{M} g_k e^{- t_k^2\|\mathbf{x}\|^2} \right|  
\le \frac{C}{a}\, \displaystyle e^{-\beta \sqrt{M}},  
\quad \text{with some} \ C,\beta >0.
\end{equation*}
Combining \eqref{galten} and \eqref{eqn:sinc_Newt}, and taking into account the separability of the
Gaussian and basis functions, we obtain the separable approximation to each entry of
the tensor $\mathbf{P}$,
\begin{equation} \label{eqn:C_nD_0}
 p_\tb{i} \approx \sum_{k=-M}^{M} g_k   \int_{\mathbb{R}^3}
 \psi_\tb{i}(\textbf{x}) e^{- t_k^2\|\mathbf{x}\|^2} \mathrm{d}\tb{x}
=  \sum_{k=-M}^{M} g_k  \prod_{\ell=1}^{3}  \int_{\mathbb{R}}
\psi^{(\ell)}_{i_\ell}(x_\ell) e^{- t_k^2 x^2_\ell } \mathrm{d}x_\ell.
\end{equation}
Define the vector 
\begin{equation} \label{eqn:galten_int}
\textbf{b}^{(\ell)}(t_k)=\{b^{(\ell)}_{i_\ell}(t_k)\} \in \mathbb{R}^{n_\ell}
\quad \text{with } \quad b^{(\ell)}_{i_\ell}(t_k)= 
\int_{\mathbb{R}} \psi^{(\ell)}_{i_\ell}(x_\ell) e^{- t_k^2 x^2_\ell } \mathrm{d}x_\ell,
\end{equation}
then the $3$rd order tensor $\mathbf{P}$ can be approximated by 
the rank-$R$ ($R=2M+1$) canonical representation
\begin{equation*} \label{sinc_general}
    \mathbf{P} \approx  \mathbf{P}_R =
\sum_{k=-M}^{M} g_k \bigotimes_{\ell=1}^{3} \textbf{b}^{(\ell)}(t_k) \in \mathbb{R}^{n\times n \times n},
\quad g_k,\, t_k \in \mathbb{R},
\end{equation*}
where $R=2M+1$ and $M$ is chosen in such a way that in the max-norm
\begin{equation*} \label{error_control}
\| \mathbf{P} - \mathbf{P}_R \|  \le \varepsilon \| \mathbf{P}\|,
\end{equation*}
where $\varepsilon >0 $ is the given tolerance. 
This construction defines the rank-$R$ ($R\leq M+1$) canonical 
tensor
 \begin{equation} \label{eqn:Newt_canTens}
{\bf P}_{R}= \sum\limits_{q=1}^{R} {\bf p}^{(1)}_q \otimes {\bf p}^{(2)}_q 
\otimes {\bf p}^{(3)}_q
\in \mathbb{R}^{n\times n \times n},
\end{equation}
with canonical vectors obtained by renumbering $q=k+M+1$, 
${\bf p}^{(\ell)}_q = g_k \textbf{b}^{(\ell)}(t_k)\in \mathbb{R}^n$, $\ell=1,\, 2, \,3$. 
This tensor approximates the discretized 3D symmetric Newton kernel $\frac{1}{\|x\|}$ ($x\in \Omega$), 
centered at the origin, such that ${\bf p}^{(1)}_q={\bf p}^{(2)}_q={\bf p}^{(3)}_q$ ($q=1,...,R$). 

\begin{figure}[htbp]
\centering
\includegraphics[width=7.0cm]{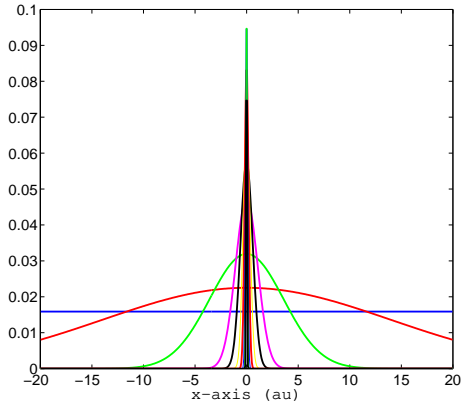}
\caption{Several vectors of the canonical tensor representation for a single 
Newton kernel along $x$-axis from a set $\{{\bf p}^{(1)}_q\}_{q=1}^R$.}
\label{fig:Newton}  
\end{figure}

Figure \ref{fig:Newton} displays several vectors of the canonical tensor representation 
for a single Newton kernel along $x$-axis from a set $\{P^{(1)}_q\}_{q=1}^R$.
Symmetry of the tensor ${\bf P}_{R}$ implies that the vectors ${\bf p}^{(2)}_q$ and ${\bf p}^{(3)}_q$ 
corresponding to $y$ and $z$-axes, respectively, are of the same shape as ${\bf p}^{(1)}_q$.
It is clearly seen that there are canonical vectors representing the long-, intermediate- and
short-range contributions to the total electrostatic potential. This interesting feature 
will be also recognized for the low-rank lattice sum of potentials 
(see Section \ref{sec:BlockStrCoreHam}).

Table \ref{Table_Times} presents CPU times for generating a canonical rank-$R$ tensor 
approximation of the Newton kernel over $n\times n\times n $ 3D Cartesian grid, 
corresponding to Matlab implementation on a terminal of the 8 AMD Opteron Dual-Core processor.
We observe a logarithmic scaling of the canonical rank $R$ in the grid size.
The compression rate denotes the ratio $n^3/(n R)$. 

\begin{table}[htb]
\begin{center}%
\begin{tabular}
[c]{|r|r|r|r|r|r|}%
\hline
grid size $n^3 $      & $8192^3$  & $16384^3$ & $32768^3$ & $65536^3$ & $131072^3$\\ 
 \hline 
 Time (sec.)         &  $6$    &    $16$  &   $61 $   &  $241$    & $1000$ \\
 \hline 
Canonical rank $R$ &  $34$ &   $37$  &   $39 $   &  $41$    & $43$ \\
 \hline 
Compression rate  & $2\cdot 10^6$ & $7\cdot 10^6$ & $2\cdot 10^7$ & $1\cdot 10^8$ & $4\cdot 10^8$  \\
 \hline 
 \end{tabular}
\caption{CPU times (Matlab) to compute canonical vectors of ${\bf P}_{R}$ for the Newton kernel 
in a box with tolerance $\varepsilon = 10^{-7}$. }
\label{Table_Times}
\end{center}
\end{table}

Notice that the low-rank canonical decomposition of the tensor ${\bf P}$ is 
the problem independent task, hence the respective canonical vectors can be precomputed at once
on very large 3D $n\times n\times n $ grid, and then stored for the multiple use. 
The storage size is bounded by $O(R n)$.

The direct tensor summation method described below
is based on the use of low-rank canonical  representation to the single Newton kernel 
${\bf P}_{R}$ in the bounding box, translated and restricted according to coordinates of the
potentials in a box.

\subsection{Direct tensor summation of electrostatic interactions} 
\label{ssec:nuclUnitC}

Here, we recall tensor summation of the electrostatic potentials 
for a non-lattice structure, that is for a (rather small) number of an arbitrarily 
distributed sources \cite{KhorVBAndrae:11,VKH_solver:13}.  
As the basic example in electronic structure calculations, we consider the nuclear potential 
operator describing the Coulombic interaction of electrons with the nuclei in a molecular 
system in a box or in a (cubic) unit cell.  
It is defined by the function $v_c(x)$ in the scaled unit cell $\Omega = [-b/2,b/2]^3$,  
\begin{equation}\label{eq Vc}
 v_c(x)=  \sum_{\nu=1}^{M_0}\frac{Z_\nu}{\|{x} -a_\nu \|},\quad
Z_\nu >0, \;\; x,a_\nu \in \Omega \subset \mathbb{R}^3,
\end{equation}
where $M_0$ is the number of nuclei in $\Omega$, and $a_\nu$,  $Z_\nu$, represent their 
coordinates and charges, respectively.

We begin with approximating the non-shifted 3D Newton kernel $\frac{1}{\|x\|}$ on the auxiliary 
extended box $\widetilde{\Omega}=[-b,b]^3 $, by its projection onto the basis set
$\{ \psi_\textbf{i}\}$ of piecewise constant functions
defined on the uniform $2n\times 2n \times 2n$ tensor grid $\Omega_{2n}$ with
the mesh size $h$, as described in Section \ref{ssec:CoulombUnit}.
This defines  the ''master`` rank-$R$ canonical tensor as above 
\begin{equation} \label{master_pot}
\widetilde{\bf P}_R= 
\sum\limits_{q=1}^{R} {\bf p}^{(1)}_q \otimes {\bf p}^{(2)}_q \otimes {\bf p}^{(3)}_q
\in \mathbb{R}^{2n\times 2n \times 2n}.
\end{equation}

For ease of exposition, we assume  that each nuclei coordinate $a_\nu$ is located
exactly at a grid-point 
$a_\nu=(i_\nu h-b/2,j_\nu h-b/2,k_\nu h-b/2)$, with some $1 \leq i_\nu,j_\nu,k_\nu \leq n $. 
Now we are able to introduce the rank-$1$ windowing operator  
${\cal W}_{\nu}={\cal W}_{\nu}^{(1)}\otimes {\cal W}_{\nu}^{(2)}\otimes {\cal W}_{\nu}^{(3)}$ for 
$\nu=1,...,M_0$ by 
\begin{equation} \label{sub_tens}
 {\cal W}_{\nu} \widetilde{\bf P}_R :=
\widetilde{\bf P}_R(i_\nu +n/2:i_\nu +3/2n;j_\nu +n/2 :j_\nu +3/2 n;k_\nu +n/2:k_\nu +3/2n)
\in \mathbb{R}^{n\times n \times n}, 
\end{equation}

With this notation, the total electrostatic potentials $v_c(x)$ in the computational 
box $\Omega$ is approximately represented by a direct canonical tensor sum
\begin{equation} \label{core_tens}
\begin{split}
 {\bf P}_{c} & = \sum_{\nu=1}^{M_0} Z_\nu {\cal W}_{\nu} \widetilde{\bf P}_R \\
             & =\sum_{\nu=1}^{M_0} Z_\nu  
\sum\limits_{q=1}^{R} {\cal W}_{\nu}^{(1)} {\bf p}^{(1)}_q \otimes 
{\cal W}_{\nu}^{(2)} {\bf p}^{(2)}_q 
\otimes {\cal W}_{\nu}^{(3)} {\bf p}^{(3)}_q\in \mathbb{R}^{n\times n \times n},
\end{split}
\end{equation}
with the rank bound 
$$
rank({\bf P}_{c})\leq M_0 R,
$$
where every rank-$R$ canonical tensor 
${\cal W}_{\nu} \widetilde{\bf P}_R \in \mathbb{R}^{n\times n \times n}$
is thought as a sub-tensor of the master tensor
$\widetilde{\bf P}_R \in \mathbb{R}^{2n\times 2n \times 2n}$ obtained 
by its shifting and restriction (windowing) onto the $n \times n \times n$ grid 
in the box $\Omega$, $\Omega_{n} \subset \Omega_{2n}$. Here a shift
from the origin is specified according to the coordinates of the corresponding nuclei, $a_\nu$,
counted in the $h$-units.

For example, the electrostatic potential centered at the origin, i.e. with $a_\nu=0$, corresponds
to the restriction of $\widetilde{\bf P}_R$ onto the initial computational box $\Omega_{n}$,
i.e. to the index set (assume that $n$ is even)
$$
\{ [n/2+i]\times[n/2+j]\times[n/2+k]\},\quad i,j,k\in \{1,...,n\}.
$$
\begin{remark}\label{rem:RankNuclPot}
The rank estimate (\ref{core_tens}) for the sum of arbitrarily positioned 
electrostatic potentials in a box (unit cell), 
$R_{c}=rank({\bf P}_{c})\leq M_0 R $, is usually too pessimistic.
Our numerical tests for moderate size molecules indicate that  the rank of the 
$(M_0 R)$-term canonical sum in (\ref{core_tens}) can be considerably reduced. 
This rank optimization can be implemented by the multigrid version of the 
canonical rank reduction algorithm, canonical-Tucker-canonical \cite{KhKh3:08}.
The resultant canonical tensor will be denoted by $\widehat{\bf P}_{c}$.
\end{remark}


The proposed grid-based representation of the exact sum of electrostatic
potentials $v_c(x)$ in a form of a tensor in a canonical format enables
its easy projection to some separable basis set,
like GTO-type atomic orbital basis often used in quantum chemical computations.
The following example illustrates calculation of the nuclear potential 
operator matrix in tensor format for molecules \cite{KhorVBAndrae:11,VKH_solver:13}.  
We show that the projection of a sum of electrostatic potentials of atoms
onto a given set of basis functions is reduced to a combination of 1D Hadamard and 
scalar products (cf. (\ref{scal})-(\ref{had}) in Appendix).   

\begin{example}\label{exm:GTOproj} \cite{KhorVBAndrae:11,VKH_solver:13} Let us 
discuss the tensor-structured calculation of the nuclear potential operator in a molecule. 
Given the set of continuous basis functions, 
\begin{equation} \label{basis}
\{g_\mu(x)\}, \quad  \mu =1,...,N_b,
\end{equation}
then each of them can be discretized by a third order tensor,
\[
{\bf G}_\mu = \left[g_\mu(x_1(i),x_2(j),x_3(k))\right]_{i,j,k=1}^n 
\in \mathbb{R}^{n\times n \times n},
\]
obtained by sampling of $g_\mu(x)$ at the midpoints  
$(x_1(i),x_2(j),x_3(k))$ of the grid-cells indexed by $(i,j,k)$.
Suppose, for simplicity, that it is a rank-$1$ canonical tensor, $rank({\bf G}_\mu)=1$, 
i.e.  
$$
{\bf G}_\mu = G_\mu^{(1)}\otimes G_\mu^{(2)} \otimes G_\mu^{(3)}
\in \mathbb{R}^{n \times n \times n},
$$
with the canonical vectors $G_\mu^{(\ell)}\in\mathbb{R}^{n} $, 
associated with mode $\ell=1,2,3$.

A sum of potentials in a box, $v_c(x)$ (\ref{eq Vc}), is represented 
in the given basis set (\ref{basis}) by a matrix $V_c=\{{v}_{km}\}\in \mathbb{R}^{N_b\times N_b}$, 
whose entries are calculated (approximated) by the simple tensor operation, see 
\cite{KhorVBAndrae:11,VKH_solver:13},
\begin{equation} \label{nuc_pot}
 {v}_{km}=  \int_{\mathbb{R}^3} v_c(x) {g}_k(x) {g}_m(x) dx \approx 
 \langle {\bf G}_k \odot {\bf G}_m ,   {\bf P}_{c}\rangle, 
\quad 1\leq k, m \leq N_b.
\end{equation}
Here ${\bf P}_{c}$ is a sum of shifted/windowed canonical tensors (\ref{core_tens})
representing the total electrostatic potential of atoms in a molecule, and 
\[
 {\bf G}_k \odot {\bf G}_m =
 (  G_k^{(1)} \odot  G_m^{(1)}  ) \otimes (G_k^{(2)} \odot  G_m^{(2)} ) \otimes
 (  G_k^{(3)} \odot  G_m^{(3)}  ) 
\]
denotes the Hadamard (entrywise) product of tensors representing the basis functions (\ref{had}), 
which is reduced to 1D products. The scalar product $\langle \cdot,\cdot \rangle$ in 
(\ref{nuc_pot}) is also reduced to 1D scalar products (\ref{scal})
due to separation of variables.
\end{example}

To conclude this section, we notice that the approximation error $\varepsilon >0$ 
caused by a separable representation of the nuclear potential 
is controlled by the rank parameter $R_{c}= rank({\bf P}_{c})\approx C\, R $,
where $C$ depends on the number of nuclei $M_0$. 
Now letting $rank({\bf G}_m) = 1$ implies that each matrix element is to be computed with 
linear complexity in $n$, $O(R \, n)$. 
The exponential convergence of the canonical approximation in the rank parameter $R$ allows us 
the optimal choice $R=O(|\log \varepsilon |)$ adjusting the overall complexity 
bound $O(|\log \varepsilon | \, n)$, independent on $M_0$.

\section{Lattice potential sums by assembled canonical  tensors} \label{sec:BlockStrCoreHam}

In the numerical treatment of extended systems the 3D summation over $L^3$ cells 
in the limit of large $L$  is considered as a hard computational task both in a bounded and in a periodic setting. 
The commonly used methods, known in the literature 
as the Ewald summation algorithms \cite{Ewald:27}, are based on a certain specific 
local-global decomposition of the Newton kernel 
(see \cite{TouBo_Ewald:96,Hune_Ewald:99,DesHolm:98})
\[
 \frac{1}{r}=\frac{\tau(r)}{r} + \frac{1-\tau(r)}{r},
\]
where the traditional choice of the cutoff function $\tau$ is the complementary error function
\[
 \tau(r)=\operatorname{erfc}(r):=\frac{2}{\sqrt{\pi}}\int_{r}^\infty \exp(-t^2) dt.
\]
In this paper, we introduce a new grid-based approach to the problem of lattice summation 
of electrostatic potentials in a box/supercell based on the fast agglomerated tensor sums.
The resulting total potential on the lattice is represented in a form of a low-rank canonical tensor. 
The main ingredients are the separable canonical tensor 
approximation of the Newton kernel and fast rank-structured tensor arithmetics.
 The proposed tensor method is not limited to a special case of   Newton 
kernel $\frac{1}{\|x\|}$, and can be applied to a general class of 
shift-invariant well separable generating potentials.

\subsection{Assembled lattice sums in a box} 
\label{ssec:nuclear_extend}

In this section, we present the efficient scheme for fast agglomerated tensor summation 
of electrostatic potentials for  a lattice in a box. 
Given the potential sum $v_c$ in the unit reference cell $\Omega=[-b/2,b/2]^d$, $d=3$,  of size 
$b\times b \times b$,  we consider an interaction potential in a bounded box
$$
\Omega_L =B_1\times B_2 \times B_3,
$$ 
consisting of a union of  $L_1 \times L_2 \times L_3$ unit cells $\Omega_{\bf k}$,
obtained  by a shift of $\Omega$ that is a multiple of
$ b$ in each variable, and specified by the lattice vector $b {\bf k}$,
${\bf k}=(k_1,k_2,k_3)\in \mathbb{Z}^d$, $0  \leq k_\ell\leq L_\ell-1 $
for $L_\ell \in \mathbb{N}$, ($\ell=1,2,3$).
Here 
$B_\ell = [-b/2 ,b/2 + (L_\ell-1) b]$ , such that the case $L_\ell=1$ corresponds to
one-layer systems in the variable $x_\ell$. Recall that by the construction $b=n h$, 
where $h >0$ is the mesh-size (same for all spacial variables).

In the case of an extended system in a box the summation 
problem for the total potential $v_{c_L}(x)$ is formulated in the 
rectangular volume $\Omega_L= \bigcup_{k_1,k_2,k_3=1}^L \Omega_{\bf k}$,
where for ease of exposition we  consider a lattice of equal sizes $ L_1= L_2=L_3=L$. 
In general, the volume box for calculations is larger than $\Omega_L$, 
by a distance of several $\Omega$ (see Figures \ref{fig:3DPeriodStruct} 
and \ref{fig:3DPeriod_Dm8}).  
On each $\Omega_{\bf k}\subset \Omega_L$, the  potential sum of interest, 
$v_{\bf k}(x)=(v_{c_L})_{|\Omega_{\bf k}}$, 
is obtained by summation over all unit cells in $\Omega_L$,
\begin{equation}\label{eqn:EwaldSumE}
v_{\bf k}(x)=  \sum_{\nu=1}^{M_0} \sum\limits_{k_1,k_2,k_3=0}^{L-1} 
\frac{Z_\nu}{\|{x} -a_\nu (k_1,k_2,k_3)\|}, \quad x\in \Omega_{\bf k}, 
\end{equation}
where $a_\nu (k_1,k_2,k_3)=a_\nu  + b {\bf k}$.
This calculation is performed at each of $L^3$ elementary cells 
$\Omega_{\bf k}\subset \Omega_L$, which usually presupposes substantial 
numerical costs for large $L$. In the presented approach these costs are essentially
reduced, as it is described further.

Figure \ref{fig:3DPeriodStruct} shows the example of a computational box with a 3D lattice-type
molecular structure of $4\times 4\times 2$ atoms and the calculated lattice sum of
electrostatic  potentials. 
\begin{figure}[tbh]
\centering
\includegraphics[width=4.5cm]{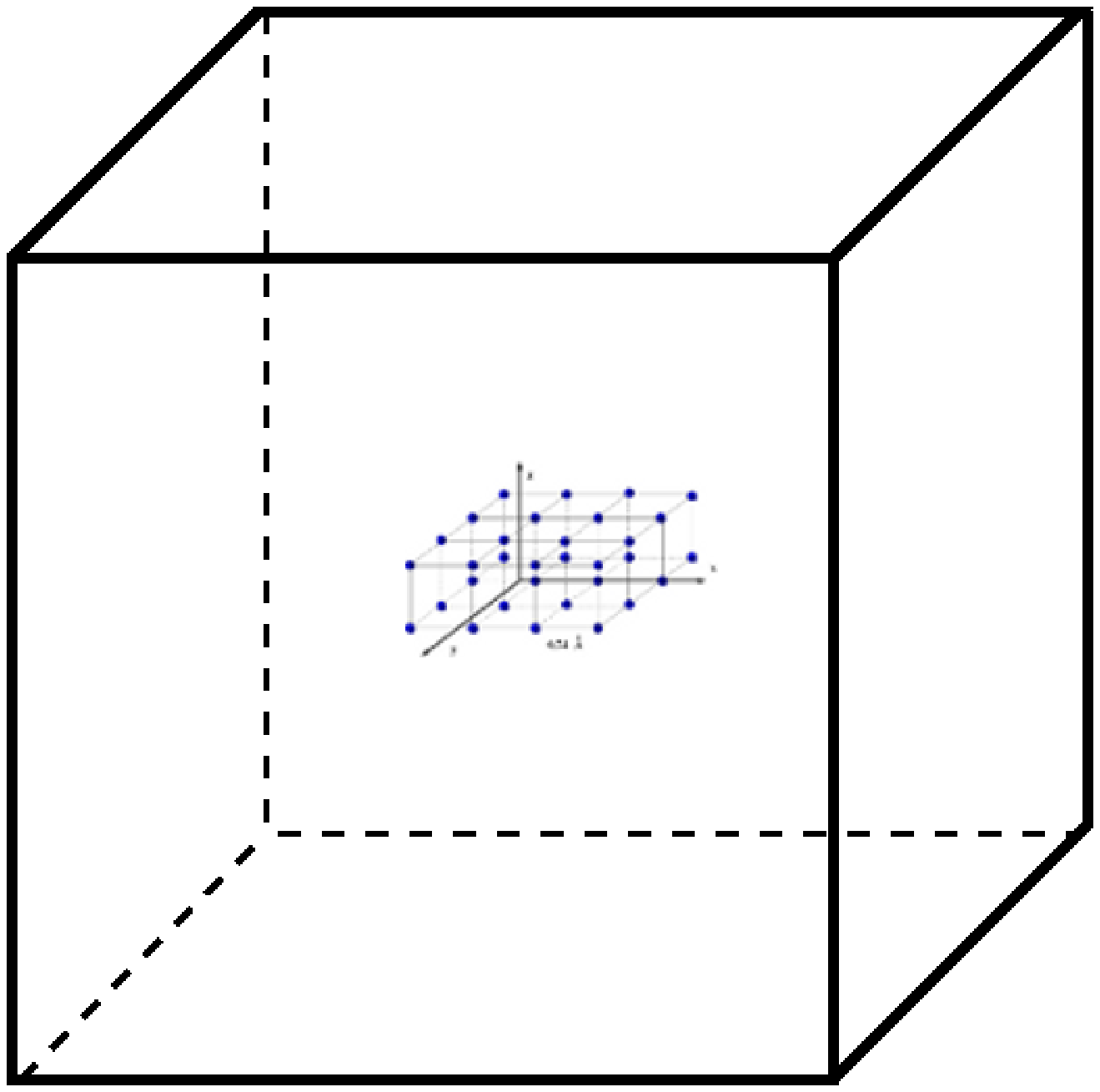} \quad \quad  
\includegraphics[width=7.0cm]{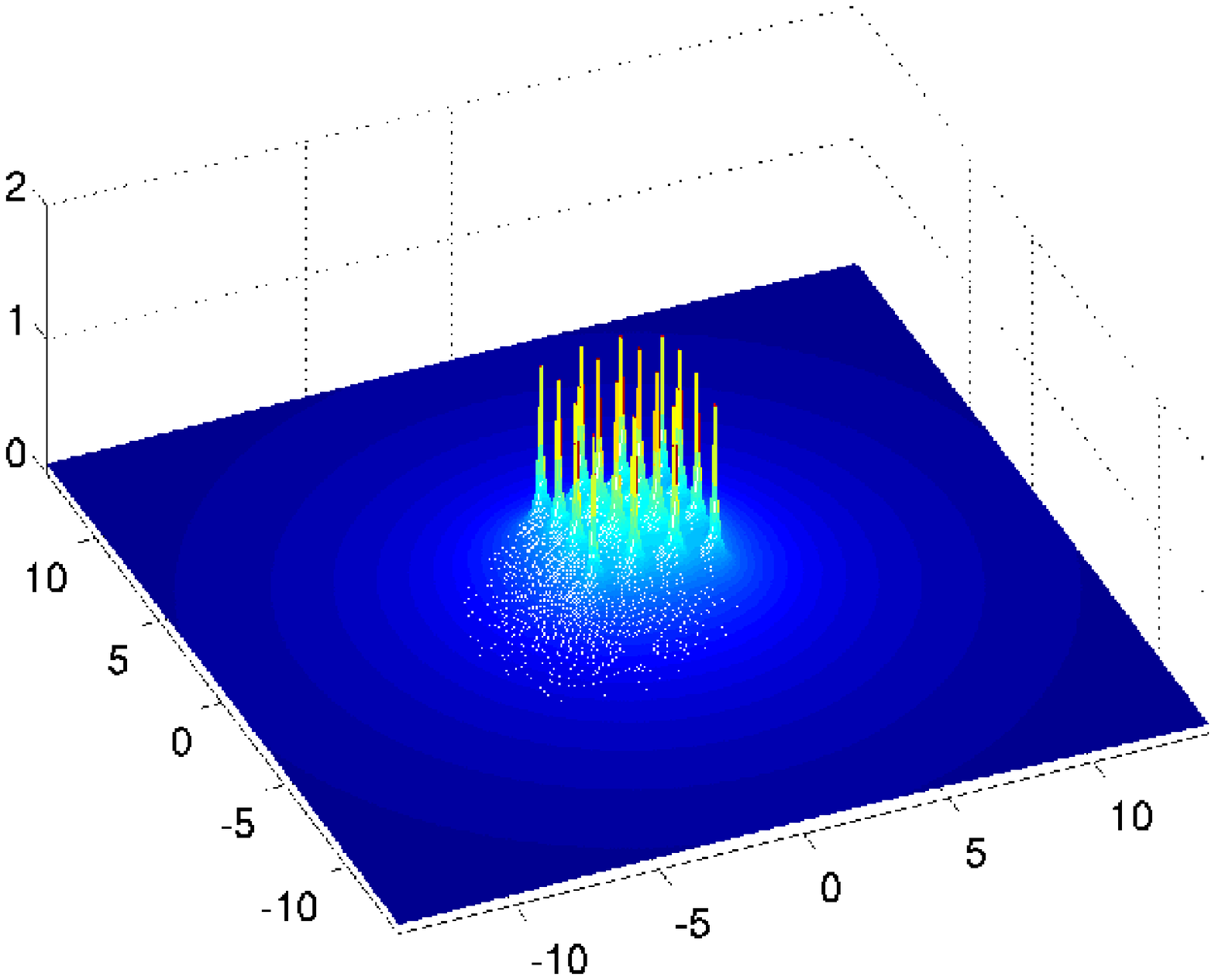}
\caption{\small  Example of $4\times 4 \times 2$ lattice compound in a computational box and
calculated potential sum.}
\label{fig:3DPeriodStruct}  
\end{figure}

Let $\Omega_{N_L}$ be the $N_L\times N_L\times N_L$ uniform grid on $\Omega_L$ with the 
same mesh-size $h$
as above, and introduce the corresponding space of piecewise constant basis functions 
of the dimension $N_L^3$. In this construction we have 
\begin{equation}\label{eqn:N0grid}
N_L= n+ n (L-1)=Ln.
\end{equation}
Similar to (\ref{master_pot}), we employ  the rank-$R$ ''master`` tensor defined 
on the auxiliary box $\widetilde{\Omega}_{L}$ by scaling ${\Omega}_{L}$ with factor $2$.
$$
\widetilde{\bf P}_{{L},R}= \sum\limits_{q=1}^{R} {\bf p}^{(1)}_q 
\otimes {\bf p}^{(2)}_q \otimes {\bf p}^{(3)}_q 
\in \mathbb{R}^{2 N_L\times 2N_L\times 2N_L},
$$
and let ${\cal W}_{\nu(k_i)}$, $i=1,2,3$, be the directional windowing operators 
associated with the lattice vector ${\bf k}$. In the next theorem we prove
the storage and numerical costs for the lattice sum  of single potentials,
each represented by a canonical rank-$R$ tensor, which corresponds to the choice
 $M_0=1$, and $a_1 =0$ in (\ref{eqn:EwaldSumE}). 
In this case the windowing operator ${\cal W} = {\cal W}_{({\bf k})}$ 
specifies the shift by the lattice vector $b{\bf k}$.

\begin{theorem}\label{thm:sumCaseE}
The projected tensor of the interaction potential 
$v_{c_L} (x)$, $x\in \Omega_{L}$,  
representing the full lattice sum over $L^3$ charges can be presented by the 
canonical tensor ${\bf P}_{c_L}$ with the rank $R$,
\begin{equation}\label{eqn:EwaldTensorGl}
{\bf P}_{c_L}= 
\sum\limits_{q=1}^{R}
(\sum\limits_{k_1=0}^{L-1}{\cal W}_{({k_1})} {\bf p}^{(1)}_{q}) \otimes 
(\sum\limits_{k_2=0}^{L-1} {\cal W}_{({k_2})} {\bf p}^{(2)}_{q}) \otimes 
(\sum\limits_{k_3=0}^{L-1}{\cal W}_{({k_3})} {\bf p}^{(3)}_{q}).
\end{equation}
The numerical cost and storage size are estimated by $O(R L N_L)$ and $O(R N_L)$,
respectively, where $N_L$ is the univariate grid size as in (\ref{eqn:N0grid}).
\end{theorem}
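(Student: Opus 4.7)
The plan is to show that the lattice sum representation \eqref{eqn:EwaldTensorGl} is obtained from the direct canonical summation \eqref{core_tens} by a simple interchange-of-sums argument, which works precisely because (i)~the single-potential tensor $\widetilde{\bf P}_{{L},R}$ is separable, (ii)~each directional windowing $\mathcal{W}^{(\ell)}_{(k_\ell)}$ acts independently on the $\ell$-th mode, and (iii)~the set of lattice shifts $\{b\,\mathbf{k}:\mathbf{k}\in\{0,\ldots,L-1\}^3\}$ is the Cartesian product of one-dimensional shift sets.

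Starting from the direct sum representation of Section~\ref{ssec:nuclUnitC} applied to the lattice charges (so $M_0=1$, $a_1=0$), the potential $v_{c_L}$ on $\Omega_L$ is discretized as
\[
{\bf P}_{c_L}=\sum_{k_1,k_2,k_3=0}^{L-1} \mathcal{W}_{(\mathbf{k})}\widetilde{\bf P}_{{L},R},
\qquad
\mathcal{W}_{(\mathbf{k})}=\mathcal{W}^{(1)}_{(k_1)}\otimes \mathcal{W}^{(2)}_{(k_2)}\otimes \mathcal{W}^{(3)}_{(k_3)},
\]
and substituting the rank-$R$ canonical form $\widetilde{\bf P}_{{L},R}=\sum_{q=1}^{R}{\bf p}^{(1)}_q\otimes{\bf p}^{(2)}_q\otimes{\bf p}^{(3)}_q$ yields
\[
{\bf P}_{c_L}=\sum_{k_1,k_2,k_3=0}^{L-1}\sum_{q=1}^{R}
\bigl(\mathcal{W}^{(1)}_{(k_1)}{\bf p}^{(1)}_q\bigr)\otimes \bigl(\mathcal{W}^{(2)}_{(k_2)}{\bf p}^{(2)}_q\bigr)\otimes \bigl(\mathcal{W}^{(3)}_{(k_3)}{\bf p}^{(3)}_q\bigr).
\]
The key algebraic step is to interchange the outer sum over $\mathbf{k}$ with the sum over $q$ and then to exploit multilinearity of the tensor product together with the Cartesian product structure of the index set: for any vectors $u_{k_1},v_{k_2},w_{k_3}$, one has $\sum_{k_1,k_2,k_3}u_{k_1}\otimes v_{k_2}\otimes w_{k_3}=\bigl(\sum_{k_1}u_{k_1}\bigr)\otimes\bigl(\sum_{k_2}v_{k_2}\bigr)\otimes\bigl(\sum_{k_3}w_{k_3}\bigr)$. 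Applying this identity for each fixed $q$ produces precisely the assembled representation \eqref{eqn:EwaldTensorGl}, with three agglomerated mode vectors per term and therefore total rank at most $R$.

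For the complexity bounds I would reason as follows. Each assembled mode vector $\sum_{k_\ell=0}^{L-1}\mathcal{W}^{(\ell)}_{(k_\ell)}{\bf p}^{(\ell)}_q$ lives in $\mathbb{R}^{N_L}$; summing $L$ directional shifts into an $N_L$-long buffer, done in the straightforward way, costs $O(L N_L)$ operations. Since there are $3R$ such vectors, the overall cost of assembling ${\bf P}_{c_L}$ is $O(R L N_L)$, and the storage for the final canonical tensor is $3R N_L=O(R N_L)$, independent of $L^3$. The main (only) subtlety is the algebraic factorization step—one must verify that the directional windowing operators commute across modes, i.e.~that $\mathcal{W}_{(\mathbf{k})}$ really splits as a mode-wise tensor product, so that no cross-mode coupling between different shifts $k_1,k_2,k_3$ arises when the outer sum is absorbed into the three modes; this is guaranteed by the definition of $\mathcal{W}_\nu$ in \eqref{sub_tens} as a separable windowing on the Cartesian grid.
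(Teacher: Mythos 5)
Your proposal is correct and follows essentially the same route as the paper: both start from the direct sum $\sum_{\mathbf{k}}\mathcal{W}_{(\mathbf{k})}\widetilde{\bf P}_{L,R}$, use the mode-wise separability of the windowing operator, and then factor the triple lattice sum into three one-dimensional sums per canonical term. The only presentational difference is that you invoke multilinearity of the tensor product over the Cartesian-product index set in a single step, whereas the paper reaches the same factorization by repeatedly applying the property that concatenating canonical tensors with two coinciding factor matrices reduces to pointwise summation in the remaining mode; your complexity accounting matches the paper's claimed $O(RLN_L)$ cost and $O(RN_L)$ storage.
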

\begin{proof}
For the moment, we fix index $\nu=1$ in (\ref{eqn:EwaldSumE}) 
and consider only the second sum  defined on the 
complete domain $\Omega_L$,
\begin{equation}\label{eqn:EwaldSumGl}
{v}_{c_L} (x)=   \sum\limits_{k_1,k_2,k_3=0}^{L-1}
\frac{Z }{\|{x} - b {\bf k} \|}, \quad x\in  \Omega_L.
\end{equation}
Then the projected tensor representation of ${v}_{c_L} (x)$ takes the form (omitting factor $Z $)
\[
  {\bf P}_{c_L}= \sum\limits_{k_1,k_2,k_3=0}^{L-1}  {\cal W}_{\nu({\bf k})}  
 {\bf P}_{{L},R}= \sum\limits_{k_1,k_2,k_3=0}^{L-1} \sum\limits_{q=1}^{R}
{\cal W}_{ ({\bf k})}( {\bf p}^{(1)}_{q} \otimes {\bf p}^{(2)}_{q} \otimes {\bf p}^{(3)}_{q})
\in \mathbb{R}^{N_L\times N_L  \times N_L},
\]
where the 3D shift vector is defined by  ${\bf k}\in \mathbb{Z}^{L\times L\times L}$. 
Taking into account the separable representation 
of the $\Omega_L$-windowing operator (tracing onto $N_L\times N_L\times N_L$ window),
$$
{\cal W}_{({\bf k})}={\cal W}_{ (k_1)}^{(1)}\otimes {\cal W}_{ (k_2)}^{(2)}
\otimes {\cal W}_{ (k_3)}^{(3)},
$$ 
we reduce the above summation to 
\begin{equation}\label{eqn:fullsum}
  {\bf P}_{c_L}=  \sum\limits_{q=1}^{R}\sum\limits_{k_1,k_2,k_3=0}^{L-1}
{\cal W}_{ ({k_1})} {\bf p}^{(1)}_{q} \otimes {\cal W}_{ ({k_2})} {\bf p}^{(2)}_{q} 
\otimes {\cal W}_{ ({k_3})} {\bf p}^{(3)}_{q}.
\end{equation} 
To reduce the large sum over the full 3D lattice, we use the following property of a sum
of canonical tensors with equal ranks $R$ and with two coinciding factor matrices: 
the concatenation (\ref{eqn:conc})
in the third mode $\ell$  can be reduced to point-wise summation of respective canonical vectors,
\begin{equation}
\label{eqn:conc2sum}
C^{(\ell)} =[{\bf a}_1^{(\ell)}+ {\bf b}_1^{(\ell)}, \ldots ,
{\bf a}_{R_a}^{(\ell)}+ {\bf b}_{R_b}^{(\ell)}],
\end{equation} 
thus preserving the same rank parameter $R$ for the resulting sum.
Notice that for each fixed $q$ the inner sum in (\ref{eqn:fullsum}) satisfies the above
property. Repeatedly using this property to a large number of canonical tensors, 
the  3D-sum (\ref{eqn:fullsum}) can be simplified to a rank-$R$ tensor obtained 
by 1D summations only,
\[
\begin{split}
 {\bf P}_{c_L} & = \sum\limits_{q=1}^{R}
(\sum\limits_{k_1=0}^{L-1}{\cal W}_{ ({k_1})} {\bf p}^{(1)}_{q}) \otimes
 (\sum\limits_{k_2,k_3=0}^{L-1} {\cal W}_{ ({k_2})} {\bf p}^{(2)}_{q} 
\otimes {\cal W}_{ ({k_3})} {\bf p}^{(3)}_{q})\\
 &= \sum\limits_{q=1}^{R} 
 (\sum\limits_{k_1=0}^{L-1}{\cal W}_{ ({k_1})} {\bf p}^{(1)}_{q}) \otimes 
(\sum\limits_{k_2=0}^{L-1} {\cal W}_{ ({k_2})} {\bf p}^{(2)}_{q}) \otimes 
(\sum\limits_{k_3=0}^{L-1}{\cal W}_{ ({k_3})} {\bf p}^{(3)}_{q}).
\end{split}
\]
The numerical cost can be estimated by taking into account the 
standard properties of canonical tensors.
\end{proof}
 
\begin{figure}[tbh]
\centering
\includegraphics[width=7.0cm]{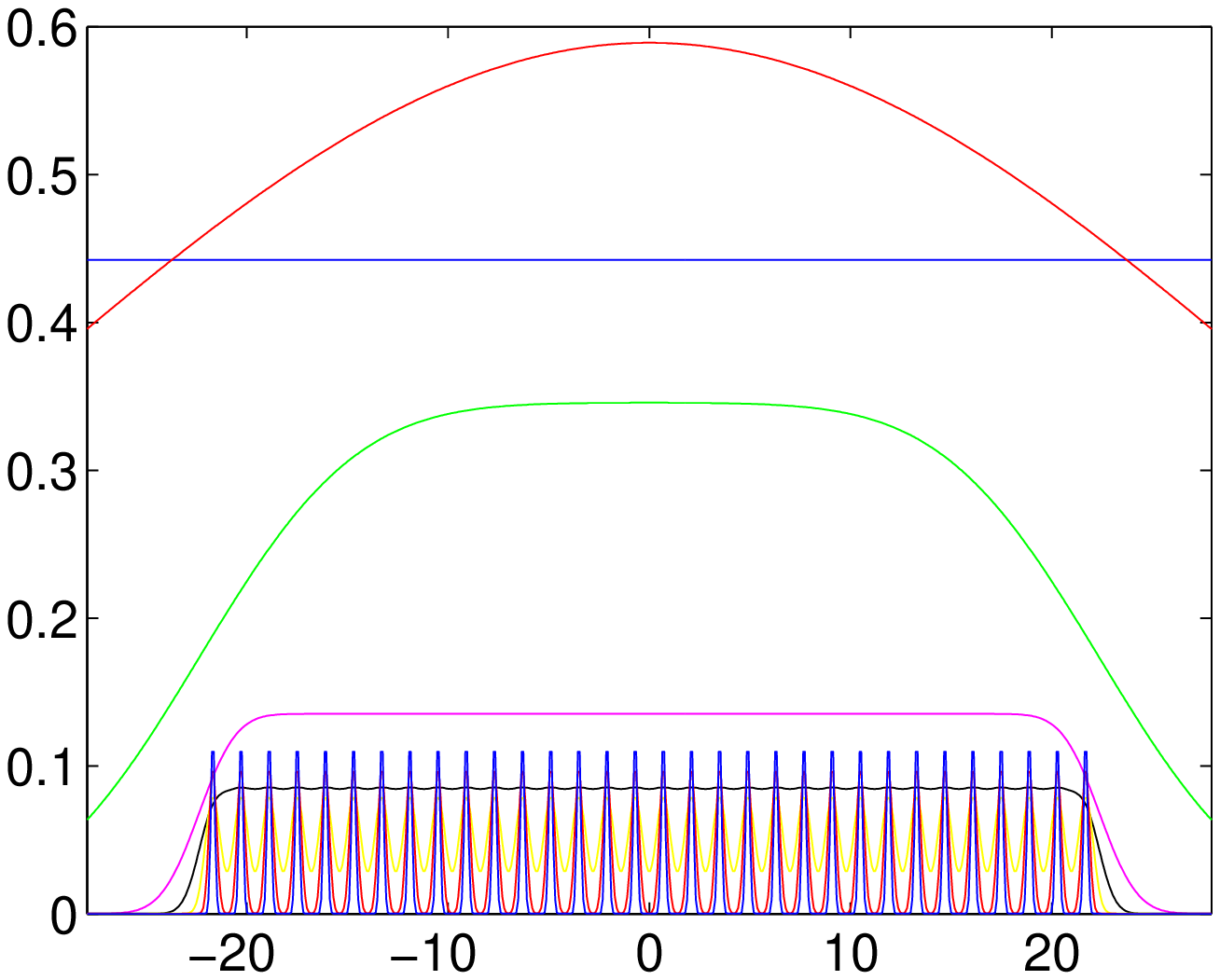}
\includegraphics[width=7.0cm]{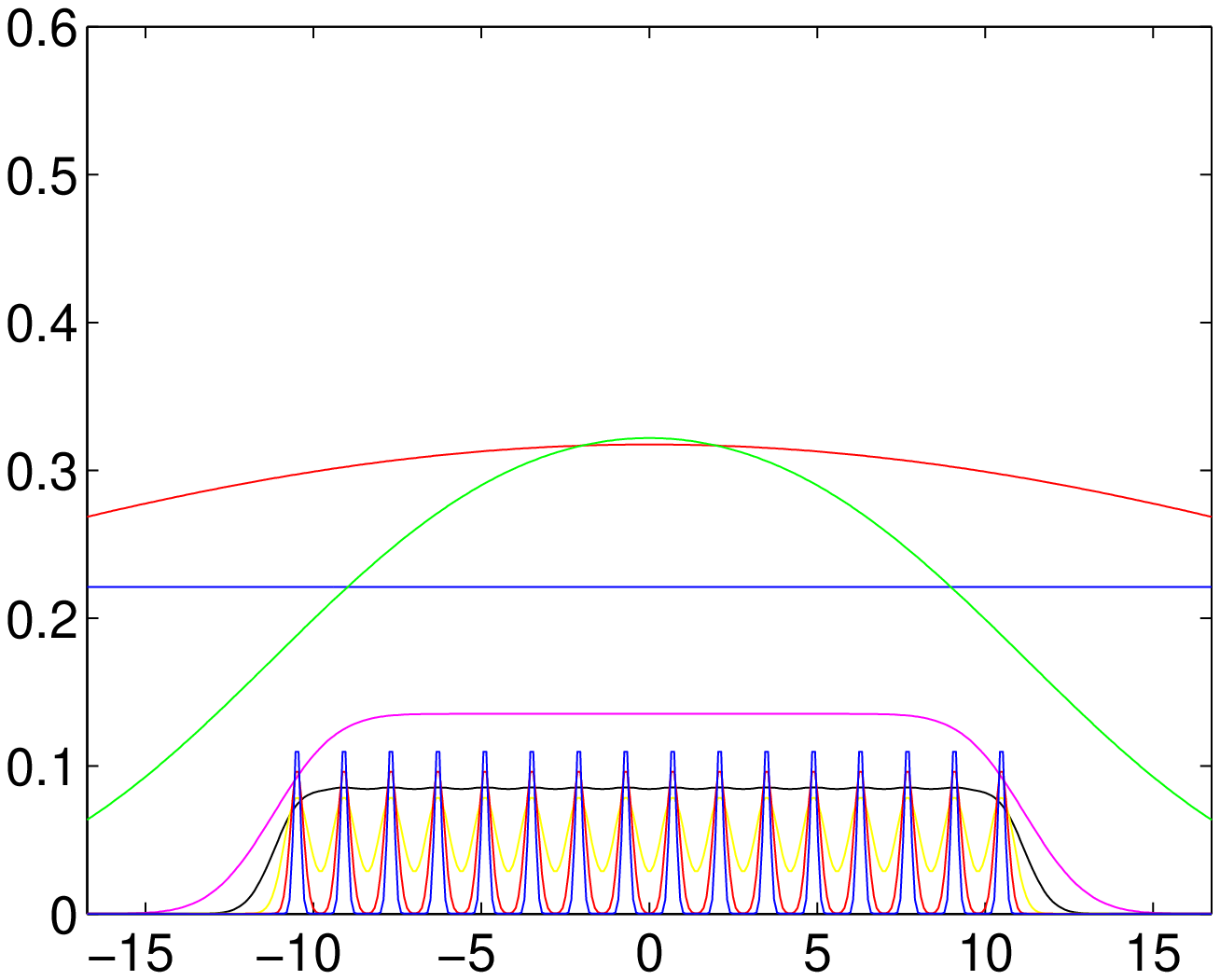}
\includegraphics[width=7.0cm]{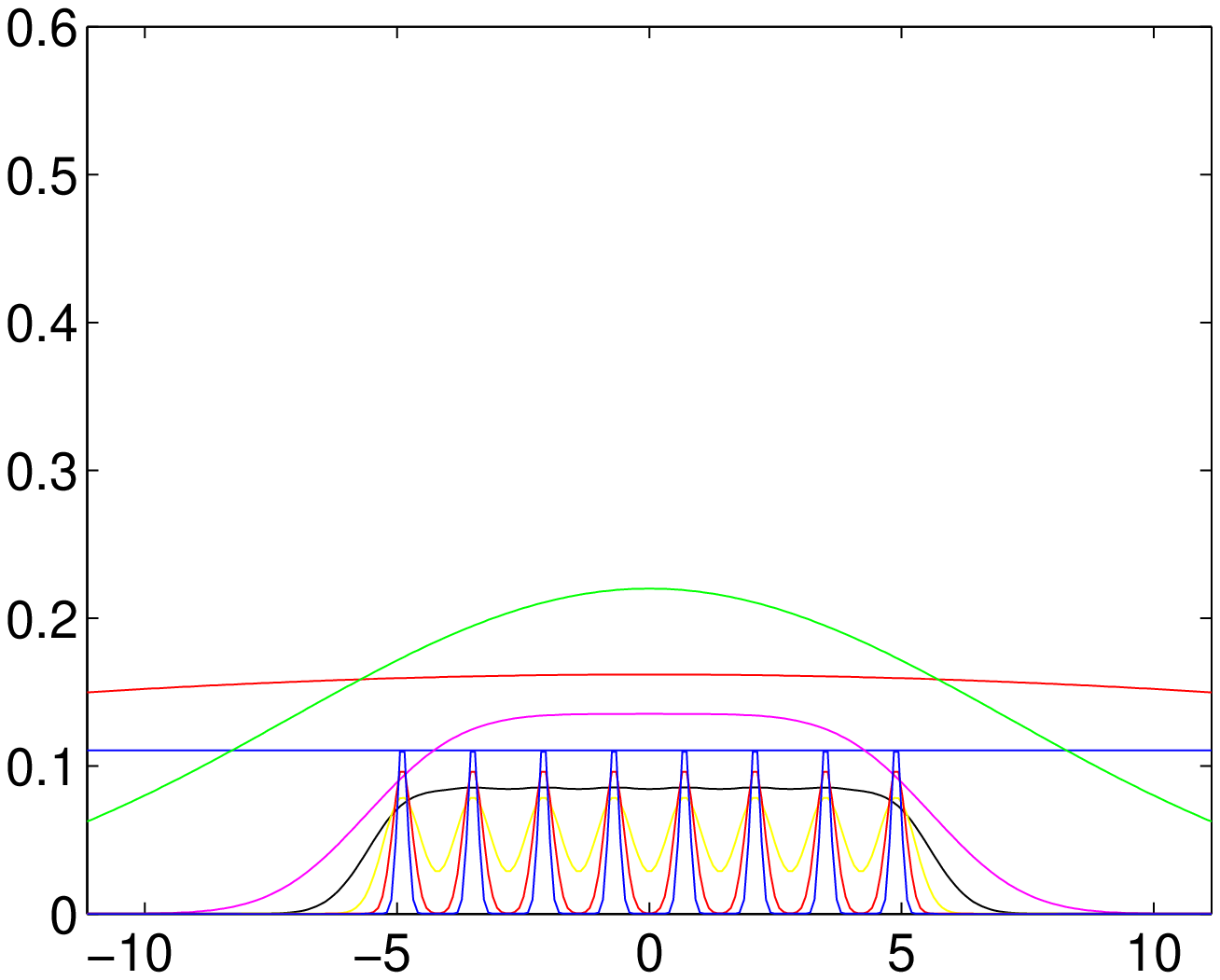}
\includegraphics[width=7.0cm]{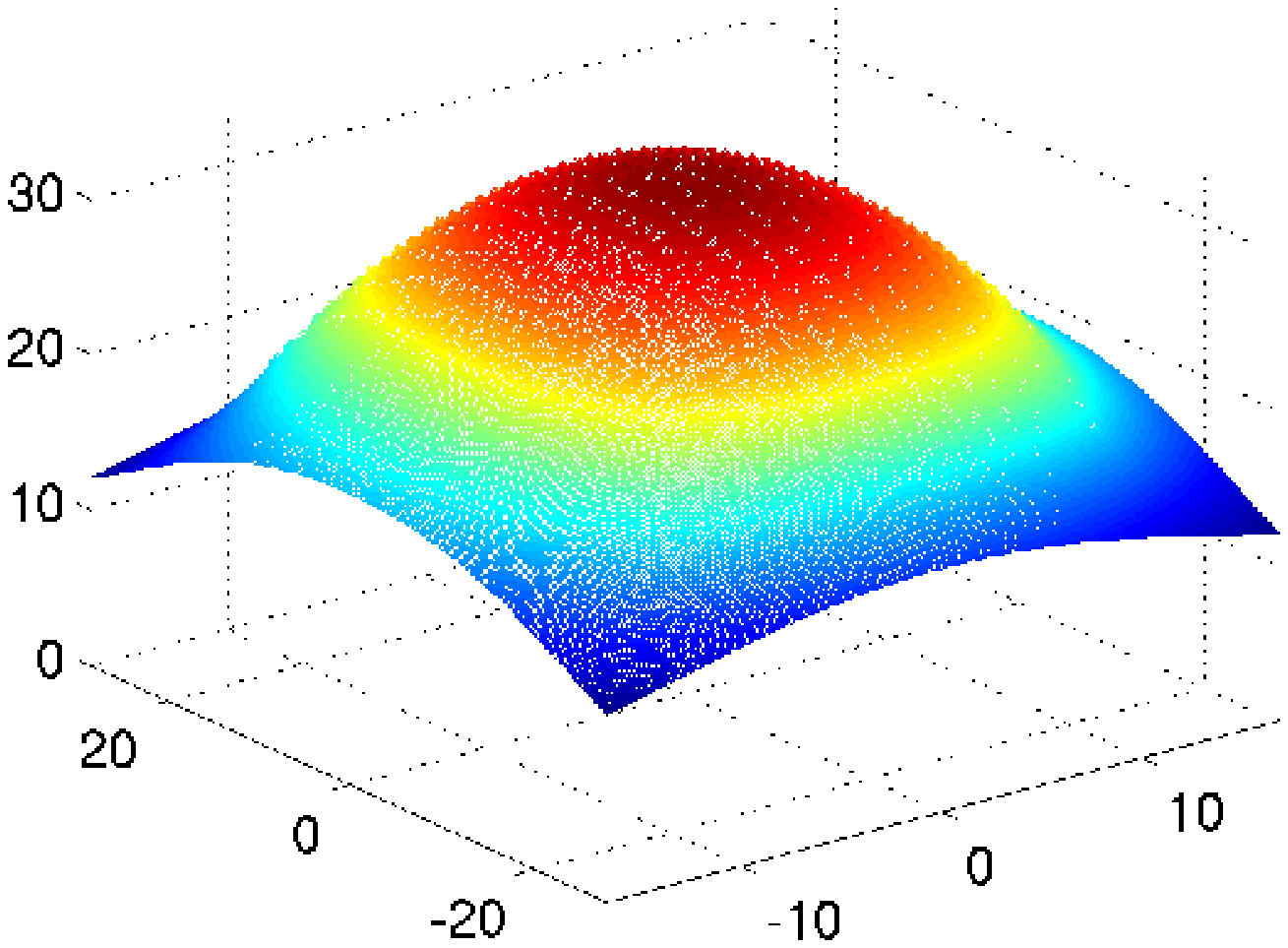}
\caption{Agglomerated canonical vectors for a sum of electrostatic potentials for a cluster of
$32\times 16\times 8$ Hydrogen atoms in a rectangular box of size $\sim 55.4 \times 33.6 \times 22.4$ $au^3$. 
Top left-right: vectors along $x$- and $y$-axes, respectively;
bottom left: vectors along $z$-axis. Right bottom: the resulting sum of $4096$ nuclei potentials at 
cross-section with $z=0.83$ au.}
\label{fig:3DPeriod_Dm8}  
\end{figure}

\begin{remark}
 For the general case $M_0 >1$, the weighted summation over $M_0$ charges leads 
to the low-rank tensor representation,
\begin{equation}\label{eqn:EwaldTensorM0}
{\bf P}_{c_L}= \sum\limits_{\nu=1}^{M_0} Z_\nu \sum\limits_{q=1}^{R}
(\sum\limits_{k_1=0}^{L-1}{\cal W}_{\nu({k_1})} {\bf p}^{(1)}_{q}) \otimes 
(\sum\limits_{k_2=0}^{L-1} {\cal W}_{\nu({k_2})}{\bf p}^{(2)}_{q}) \otimes 
(\sum\limits_{k_3=0}^{L-1}{\cal W}_{\nu({k_3})} {\bf p}^{(3)}_{q}).
\end{equation}
\end{remark}

\begin{remark}
The previous construction applies to the uniformly spaced positions of charges.
However, our tensor summation method remains valid for a
non-equidistant $L \times L \times L $  tensor lattice.
\end{remark}

Figure  \ref{fig:3DPeriod_Dm8} illustrates the shape of canonical vectors for
the $32\times 16\times 8$ lattice sum in a box (summation of $4096$ potentials). 
Here the canonical rank $R = 25$, and $\varepsilon=10^{-6}$.
It demonstrates how the assembled vectors composing the tensor lattice sum 
incorporate simultaneously 
the canonical vectors of shifted Newton kernels.
It can be seen that separate canonical vectors capture the 
local, intermediate and long-range contributions to the total sum.
\begin{figure}[htb]
\centering
\includegraphics[width=6.0cm]{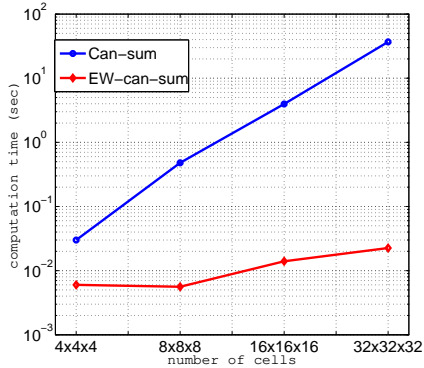}
\caption{ CPU times ($\log$ scaling) for calculating the sum of Coulomb  potentials  
over 3D  $L \times L\times L$ lattice by using direct canonical tensor summation 
(blue line) and assembled lattice summation (red line).}
\label{fig:Scalin_in_L_Ewald}  
\end{figure}

\begin{table}[tbh]
\begin{center}%
\begin{tabular}
[c]{|r|r|r|r|r|r|r|r|}%
\hline
$L$                 & $2$    & $4$     & $8$     & $16$     & $32$  & $64$ & $128$ \\
\hline
Times for $L\times L\times 1$ & $0.003$ & $0.004$ & $0.0073$ & $0.025$  & $0.128$  & $0.65$ & $2.96$ \\
\hline \hline
Times for $L\times L\times L$ & $0.003$ & $0.005$ & $0.0098$ & $0.039$  & $0.19$  & $0.88$ & $4.01$ \\
\hline  
$L^3$         & $8$ & $64$ & $512$ & $4096$  & $32768$  & $262144$ & $2097152$ \\
\hline \hline
 \end{tabular}
\caption{Times (sec) vs. $L$ for the assembled calculation of the lattice potential
${\bf P}_{c_L}$ for the clusters $L \times L \times 1$ and $L \times L \times L $. 
The last line shows the total number of cells (here Hydrogen atoms) for 
the clusters of type $L\times L\times L$.}
\label{Table_timesL}
\end{center}
\end{table}
\begin{figure}[tbh]
\centering
\includegraphics[width=7.5cm]{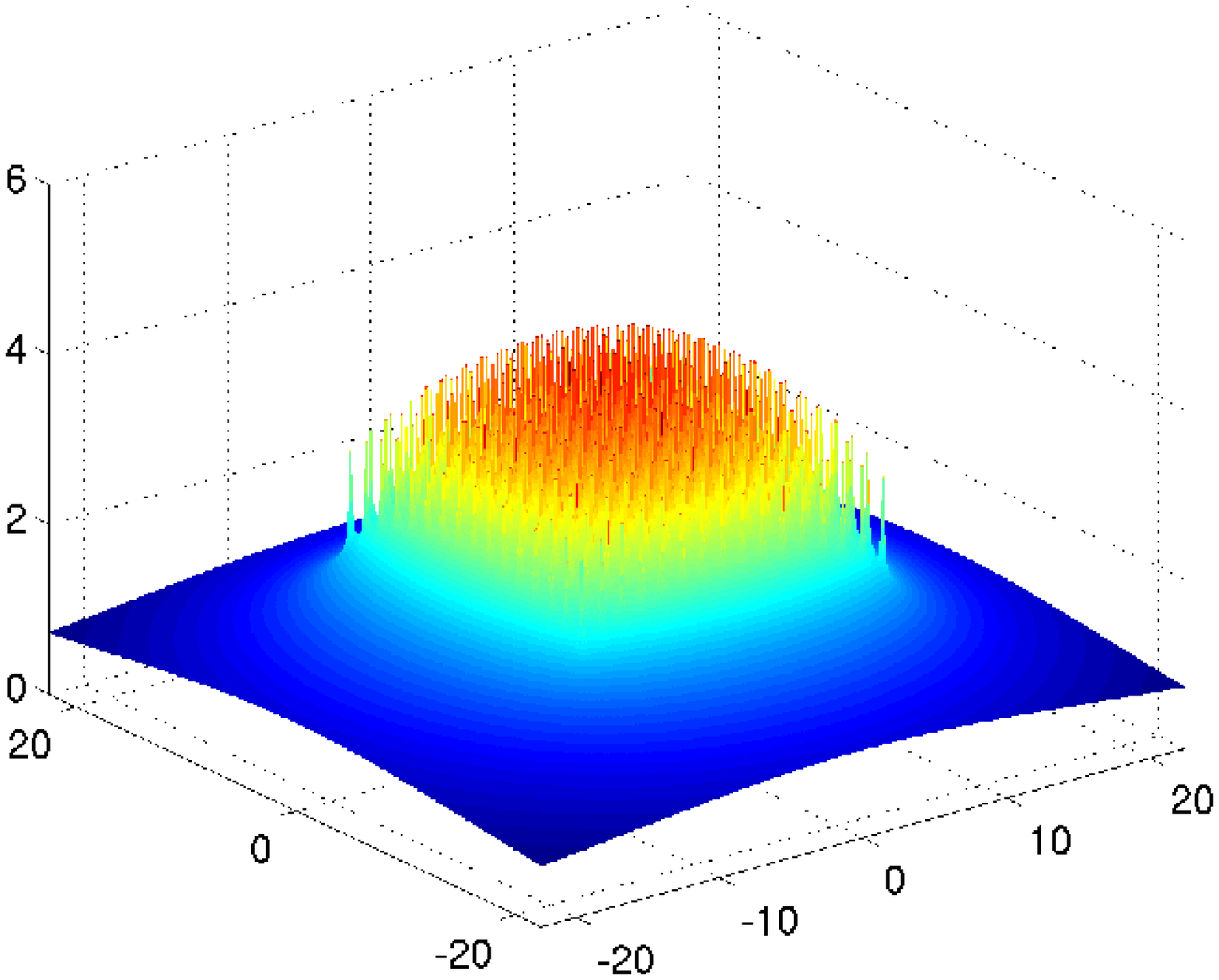}\quad \quad
\includegraphics[width=7.5cm]{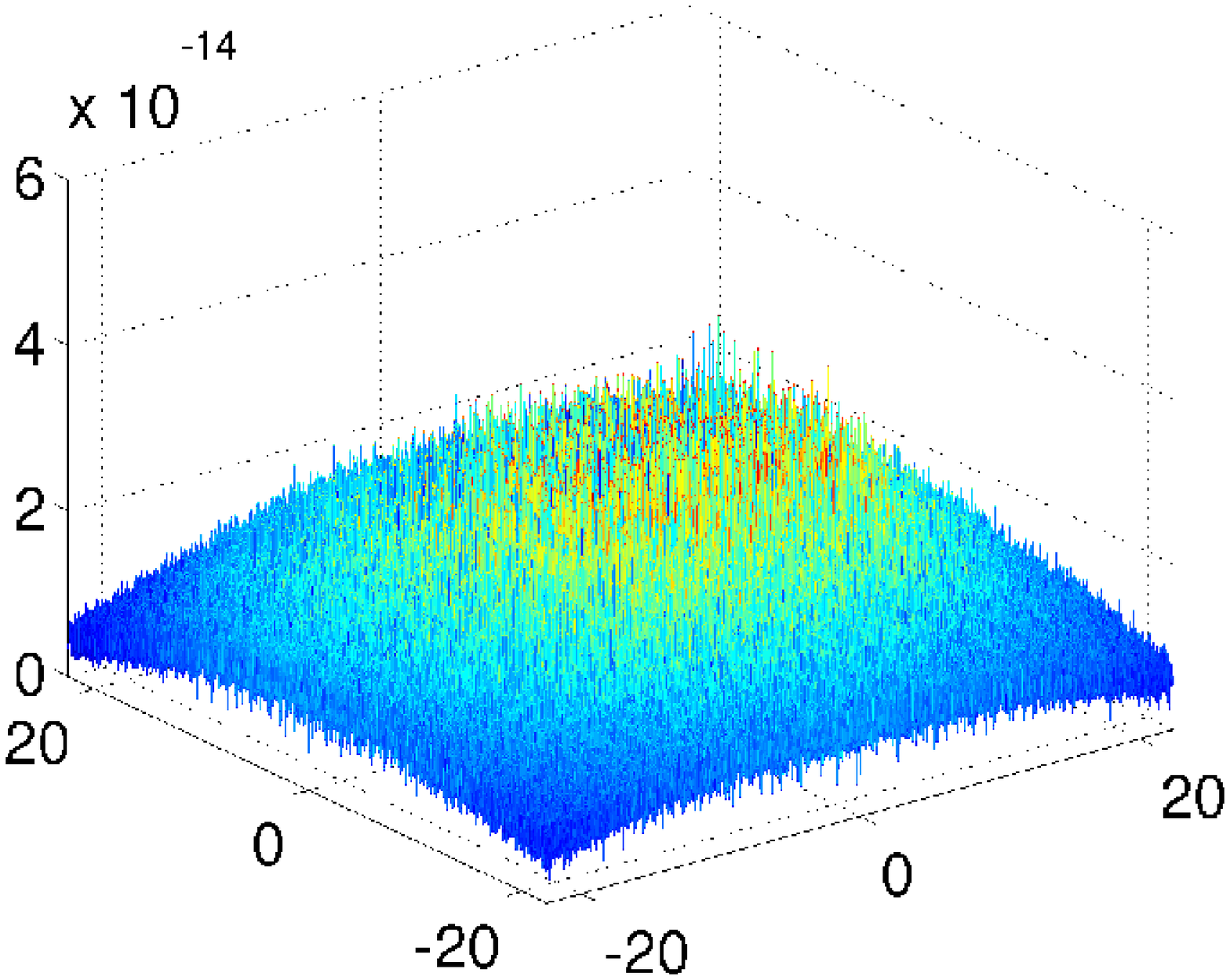}
\caption{Left: The electrostatic potential of the cluster of $16 \times 16 \times 2$ Hydrogen atoms
in a box (512 atoms). Right: the absolute error of the assembled tensor sum on this cluster
by (\ref{eqn:EwaldTensorGl}) with respect to the direct tensor summation (\ref{core_tens}).}
\label{fig:Ewald_comp}  
\end{figure}

The canonical tensor representation (\ref{eqn:EwaldTensorGl}) reduces 
dramatically the numerical costs and storage consumptions. Figure \ref{fig:Scalin_in_L_Ewald}
compares the direct and assembled tensor summation methods 
(grid-size of a unit cell, $n=256$).
Contrary to the direct canonical summation of the nuclear potentials on a 3D lattice that 
scales at least linearly in  the size of the cubic lattice, $N_L L^3$ (blue line), 
the CPU time for directionally agglomerated  canonical summation 
in a box via (\ref{eqn:EwaldTensorGl}) scales as  $N_L L$ (red line). 

Table \ref{Table_timesL} illustrates  complexity scaling $O(N_L L)$ for  
tensor lattice summation in a box of size $L\times L\times 1$ and $L\times L\times L$.
We observe the $L^2$ scaling which confirms our theoretical estimates.

Figure \ref{fig:Ewald_comp} compares  the tensor sum obtained by the assembled 
canonical vectors with the results of direct tensor sum for same configuration.
The absolute difference of the corresponding sums for a cluster of $16 \times 16\times 2$ cells 
(here a cluster of 512 Hydrogen atoms) is close to machine accuracy $\sim 10^{-14}$.

\subsection{Assembled tensor sums in a periodic setting}
\label{ssec:TensorSumPeriod}

In the periodic case we introduce the periodic cell 
${\cal R}= b \mathbb{Z}^d$, $d=1,2,3$, and consider a 3D $T$-periodic 
supercell of size $T\times T\times T$, with $T= bL$. 
The total electrostatic potential in $\Omega_L$ is obtained by the respective summation 
over the supercell $\Omega_L$ for possibly large $L$.
Then  the 
electrostatic potential in any of $T$-periods is obtained by replication of the 
respective data from $\Omega_L$.

The potential sum $v_{c_L}(x)$ is designated at each elementary 
unit-cell in $\Omega_L$ by the same value (${\bf k}$-translation invariant).
Consider the case $d=3$. Supposing for simplicity 
that $L$ is odd, $L=2p+1$, the reference value of $v_{c_L}(x)$ will be computed at the central cell
$\Omega_0$, indexed by $(p+1,p+1,p+1)$, by summation over all the contributions from $L^3$ 
elementary sub-cells in $\Omega_L$,
\begin{equation}\label{eqn:EwaldSumP}
 v_0(x)=  \sum_{\nu=1}^{M_0} \sum\limits_{k_1,k_2,k_3=1}^L
 \frac{Z_\nu}{\|{x} -a_\nu (k_1,k_2,k_3)\|}, 
\quad x\in \Omega_0.
\end{equation}

Now the projected tensor sum can be computed by a simple modification of (\ref{eqn:EwaldTensorM0}).
 
\begin{lemma}\label{lem:sumCaseP}
The projected tensor of $v_{\Omega_L} $ for the full sum over $M_0$ charges 
can be presented by rank-$(M_0 R)$ canonical tensor.
The computational cost is estimated by $O(M_0 R n L)$, while 
the storage size is bounded by $O(M_0 R  n)$.
\end{lemma}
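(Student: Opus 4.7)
The plan is to adapt the assembled-canonical construction of Theorem \ref{thm:sumCaseE} to the periodic setting by composing it with a separable restriction onto the reference cell $\Omega_0$. By $T$-periodicity, the potential on every unit cell equals the value $v_0(x)$ on $\Omega_0$ defined in (\ref{eqn:EwaldSumP}), so it suffices to represent the result on an $n\times n\times n$ grid only. Writing the sum (\ref{eqn:EwaldSumP}) in tensor form produces exactly the multi-charge expression (\ref{eqn:EwaldTensorM0}), but with lattice index set $\{1,\dots,L\}^3$ centered on the cell $(p+1,p+1,p+1)$, and post-composed with the rank-$1$ window ${\cal W}_{\Omega_0}={\cal W}^{(1)}_{\Omega_0}\otimes{\cal W}^{(2)}_{\Omega_0}\otimes{\cal W}^{(3)}_{\Omega_0}$ that traces each mode onto the $n$ grid points of $\Omega_0$.

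Because this window is separable and of rank one, applying it to the rank-$(M_0 R)$ canonical representation amounts to restricting each mode-factor independently, so the canonical rank is preserved at $M_0 R$, and each mode-vector is shortened from length $N_L=nL$ down to length $n$. Concretely, for each fixed $\nu$ and $q$, I would replace the assembled vector $\sum_{k_\ell}{\cal W}_{\nu({k_\ell})}{\bf p}^{(\ell)}_q$ by the pointwise sum of the $L$ shifted length-$n$ slices of ${\bf p}^{(\ell)}_q$ whose supports intersect $\Omega_0$, using the same concatenation-to-sum reduction (\ref{eqn:conc2sum}) that drives the proof of Theorem \ref{thm:sumCaseE}. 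Each such length-$n$ vector is built with $O(nL)$ work, and looping over the three directions, the $R$ rank indices, and the $M_0$ charges yields the claimed cost $O(M_0 R n L)$; since $3 M_0 R$ vectors of length $n$ are stored, the memory demand is $O(M_0 R n)$ as asserted.

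The only genuine point to verify is a geometric compatibility condition: the master tensor $\widetilde{\bf P}_{L,R}$ must be defined on a sufficiently enlarged auxiliary box (the factor-$2$ scaling of $\Omega_L$ used in Section \ref{ssec:nuclear_extend} is already designed for this) so that every shift ${\cal W}_{\nu({\bf k})}$ whose tail reaches $\Omega_0$ is well-defined and picks up the correct canonical data; one must also check that the non-central placement of $\Omega_0$ relative to the lattice does not require extending the canonical vectors any further. I expect this bookkeeping of shift indices and window boundaries, rather than any new tensor-algebraic content, to be the main practical obstacle. Once it is settled, the storage and complexity estimates follow immediately from the separable structure of both the canonical tensor and the windowing operator.
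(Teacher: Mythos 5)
Your proposal is correct and follows essentially the same route as the paper: the paper likewise redefines the windowing operator to shift by the lattice vector ${\bf k}$ and project onto the small $n\times n\times n$ reference cell, then invokes the same concatenation-to-sum argument from Theorem \ref{thm:sumCaseE} to collapse the $L^3$ lattice sum into directional sums of length-$n$ vectors, giving rank $M_0R$, cost $O(M_0RnL)$ and storage $O(M_0Rn)$. Your closing remark about the master tensor being defined on a sufficiently enlarged auxiliary box is a sound bookkeeping point that the paper leaves implicit rather than a genuine divergence.
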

\begin{proof}
We fix index $\nu=1$ in (\ref{eqn:EwaldSumP}) and chose the central cell 
$\Omega_0$ as above to obtain
\begin{equation}\label{eqn:EwaldSumLoc}
v_{\Omega_L} (x)=   \sum\limits_{k_1,k_2,k_3=1}^L
\frac{Z_\nu}{\|{x} -a_\nu (k_1,k_2,k_3)\|}, \quad x\in  \Omega_0,
\end{equation}
for the local lattice sum on the index set $n\times n \times n$, and
\[
 {\bf P}_{\Omega_0}= Z_\nu \sum\limits_{k_1,k_2,k_3=1}^L  {\cal W}_{\nu({\bf k})}  {\bf P}_{\Omega_0}
= Z_\nu \sum\limits_{k_1,k_2,k_3=1}^L \sum\limits_{q=1}^{R_{\cal N}}
{\cal W}_{\nu({\bf k})} {\bf p}^{(1)}_{q} \otimes {\bf p}^{(2)}_{q} \otimes {\bf p}^{(3)}_{q}
\in \mathbb{R}^{n\times n \times n},
\]
for the corresponding local projected tensor of small size $n\times n \times n$.
Here we adapt the $\Omega$-windowing operator,
$
{\cal W}_{\nu({\bf k})}={\cal W}_{\nu(k_1)}^{(1)}\otimes {\cal W}_{\nu(k_2)}^{(2)}
\otimes {\cal W}_{\nu(k_3)}^{(3)},
$
that projects onto the small $n\times n \times n$ unit cell by shifting on  the lattice vector 
${\bf k}=(k_1,k_2,k_3)$. Now the canonical representation follows by the arguments as
in the proof of Theorem \ref{thm:sumCaseE}, with the similar complexity analysis.
\end{proof}

Figure \ref{fig:3DStructCanVect} shows the assembled canonical vectors for 
a lattice structure in a periodic setting. 

\begin{figure}[htbp]
\centering
\includegraphics[width=7.0cm]{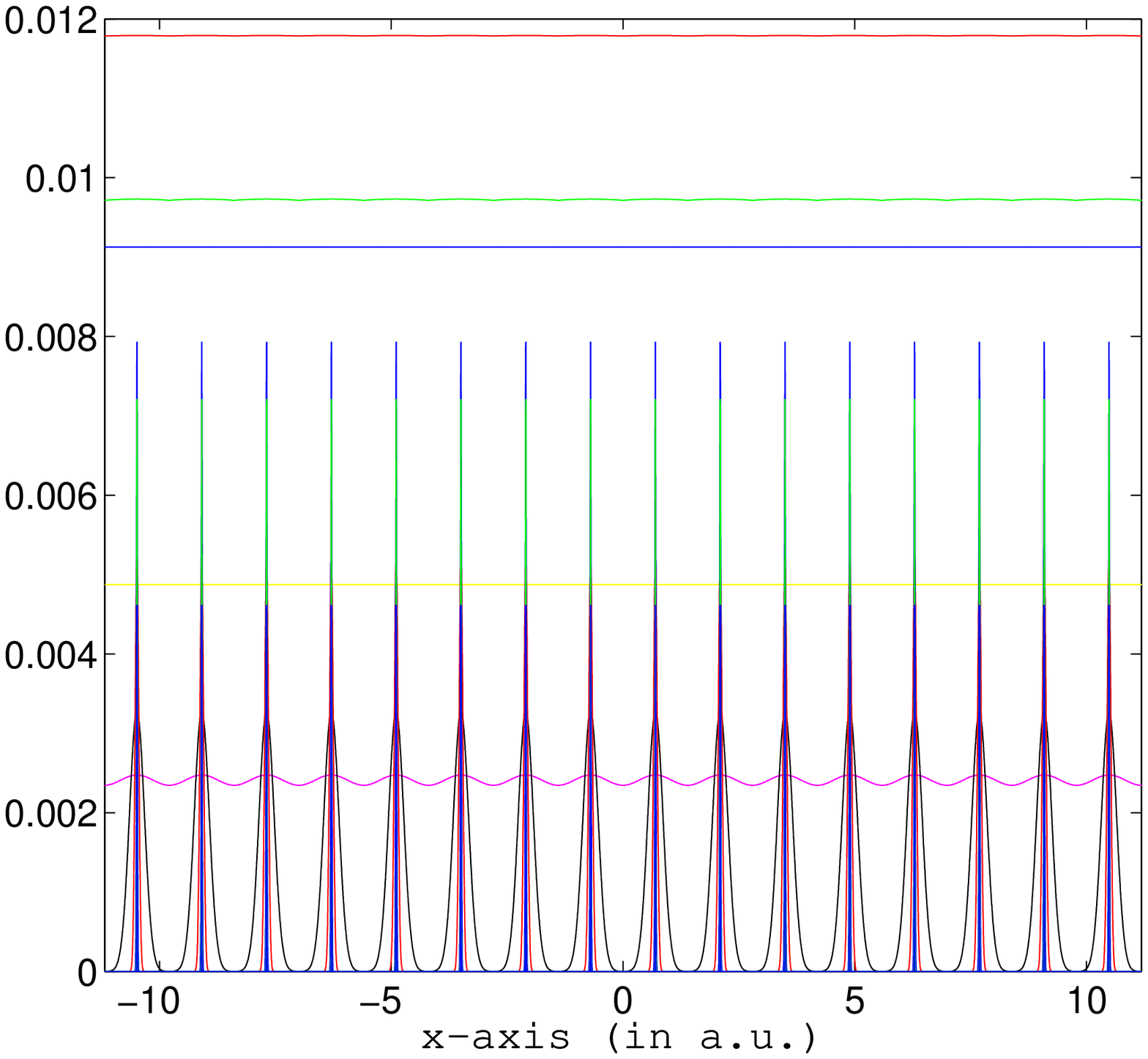}\quad \quad
\includegraphics[width=7.0cm]{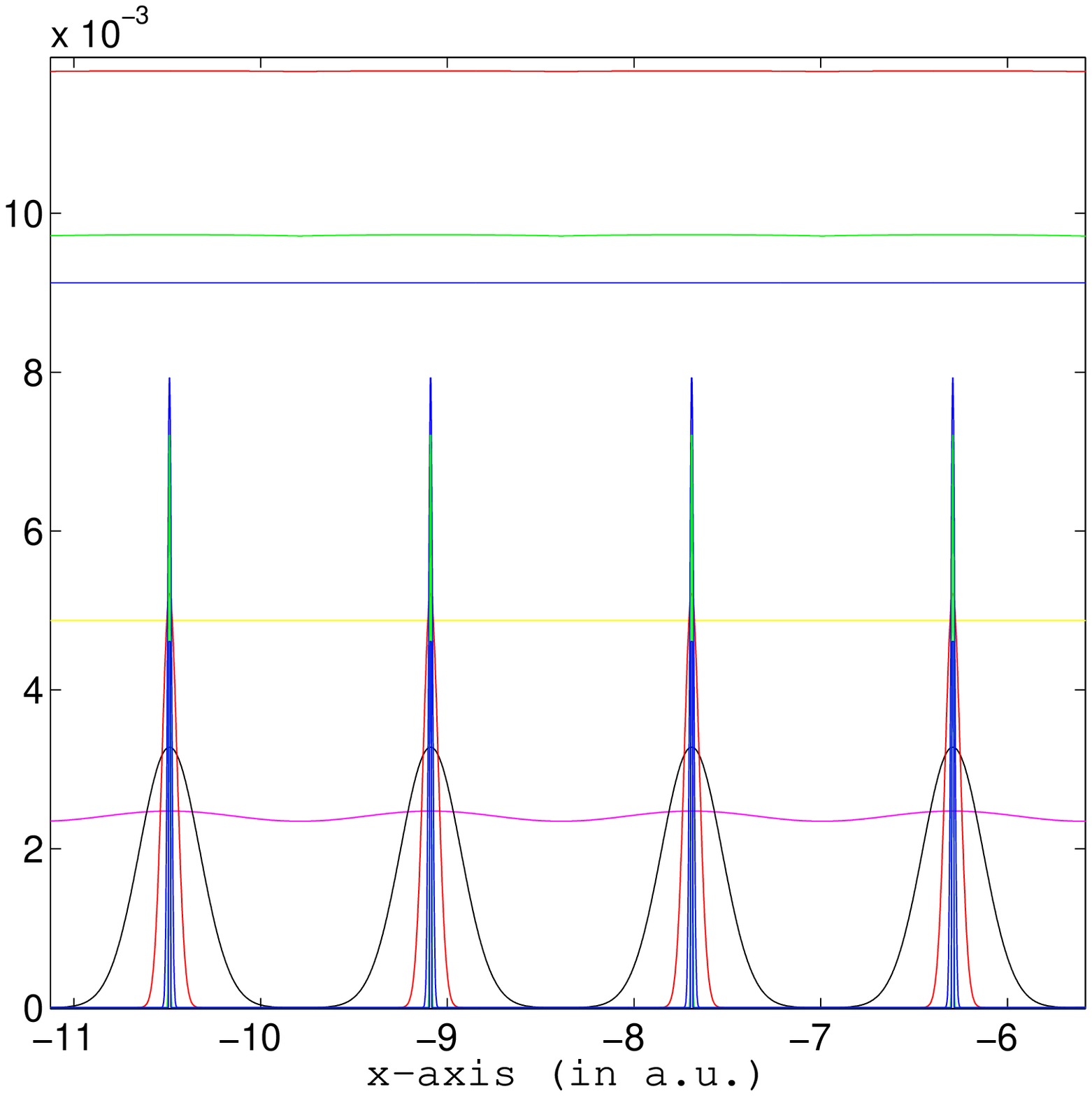}
\caption{Periodic canonical vectors in the $L\times 1\times 1$ lattice sum, $L=16$ (left); 
Zooming of four periods (right).}
\label{fig:3DStructCanVect}  
\end{figure}

Recall that in the limit of large $L$ the lattice sum ${\bf P}_{c_L}$ of the Newton 
kernels is known to converge  only conditionally. The same is true for a sum in a box.
The maximum norm increases as $C_1 \log L$, $C_2 L$ and $C_3 L^2$ 
for 1D, 2D and 3D sums, respectively (see Figure\ref{fig:ConditSum}).
This issue is of special significance in the periodic setting, dealing with the limiting case
$L \to \infty$.
To approach the limiting case, we compute a ${\bf P}_{c_L}$ on a sequence of 
large parameters $L, 2L, 4L$ etc. and then apply the Richardson extrapolation as 
described in the following. As result, we obtain the regularized tensor $\widehat{p}_L$ 
restricted to the reference unit cell $\Omega_0$.


Figure \ref{fig:ConditSum} presents the value $p_0$ of the potential sum  at the center of 
a bounded box vs. $L$, for $L\times 1\times 1$, $L\times L\times 1$ 
and $L\times L\times L$ lattice sums, where $L=2,4,8,...,128$. 
The predicted asymptotic behaviour in $L$ is easily seen.

\begin{figure}[htbp]
\centering
\includegraphics[width=5.0cm]{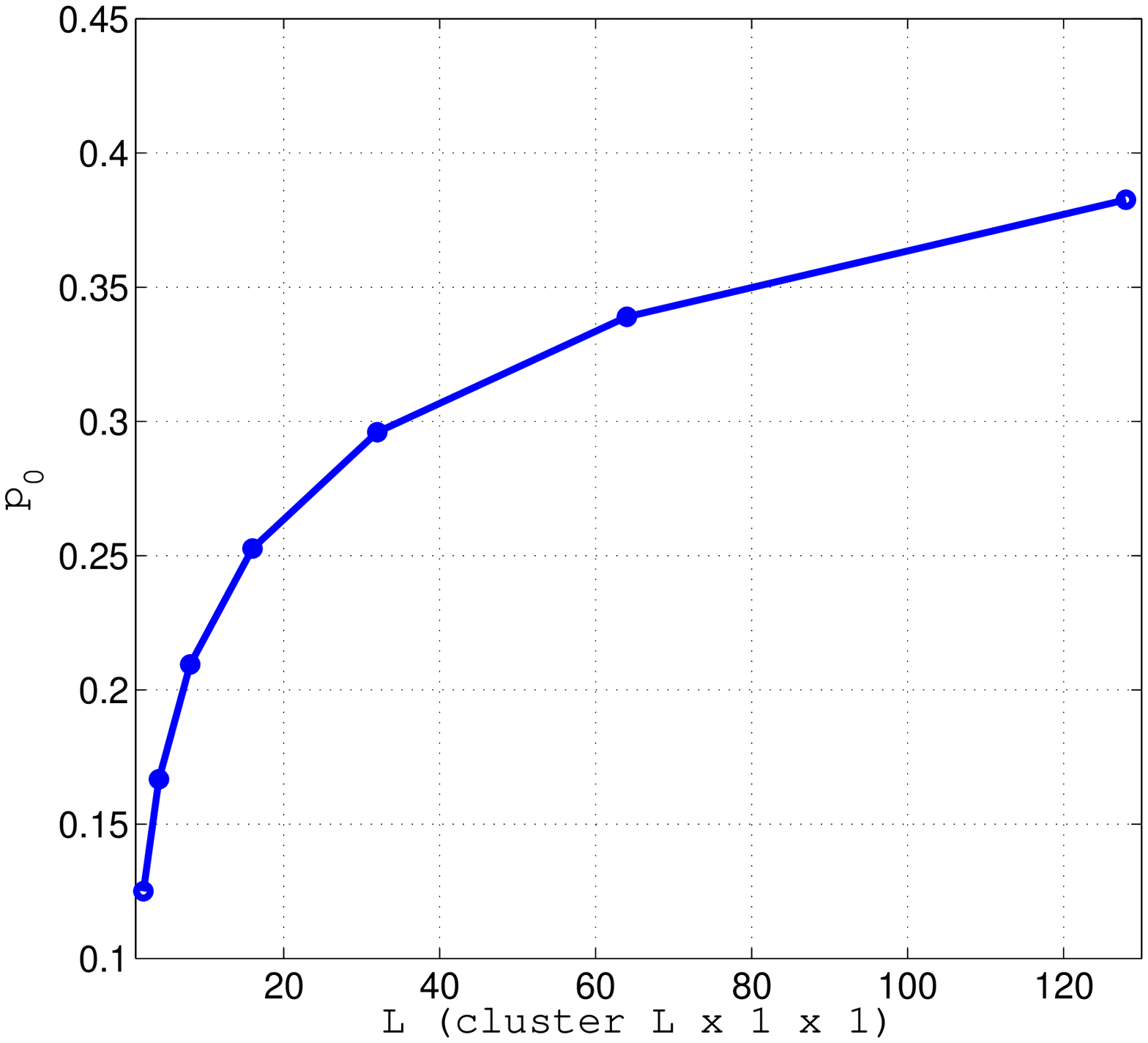}\quad 
\includegraphics[width=5.0cm]{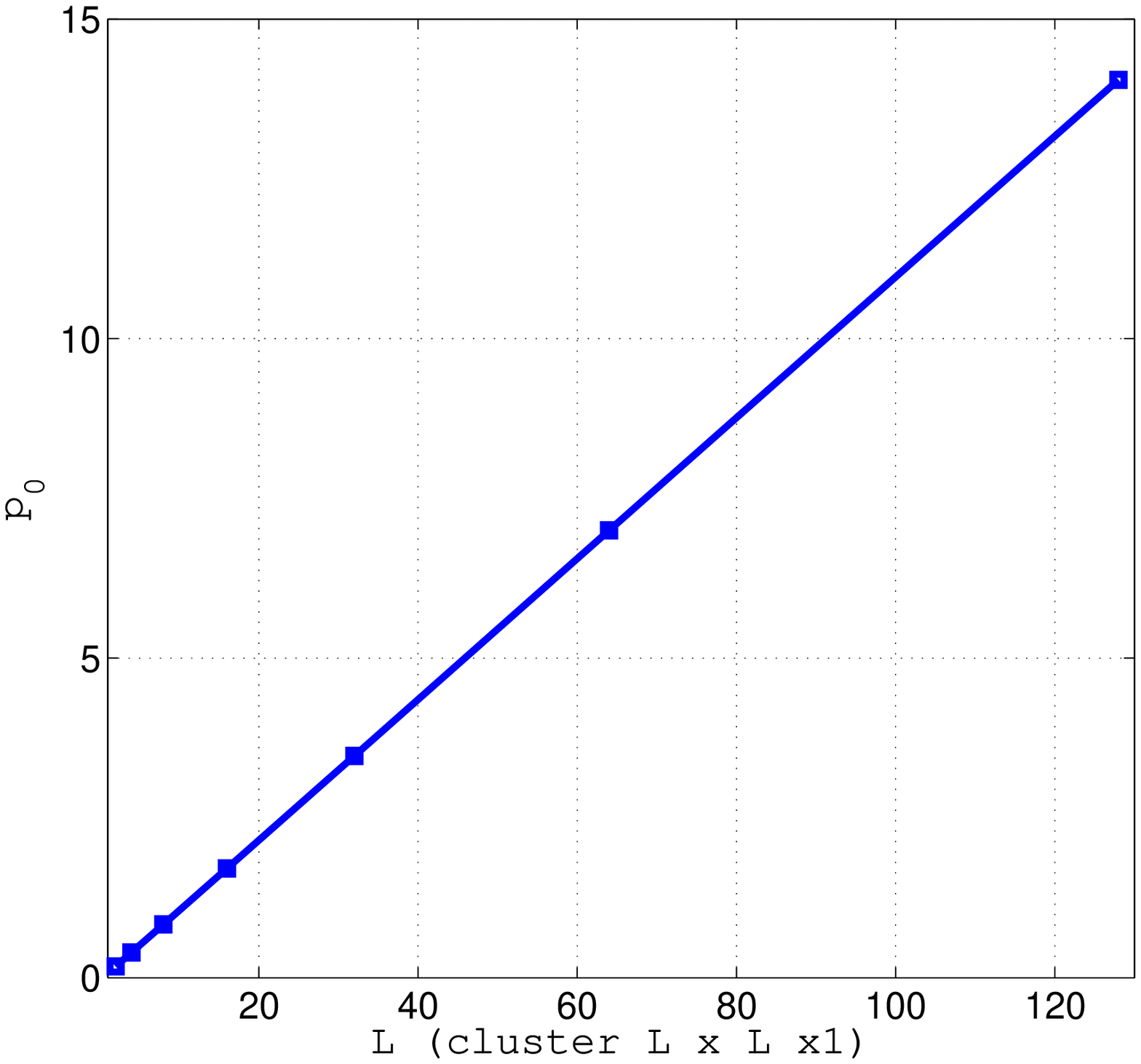}\quad 
\includegraphics[width=5.0cm]{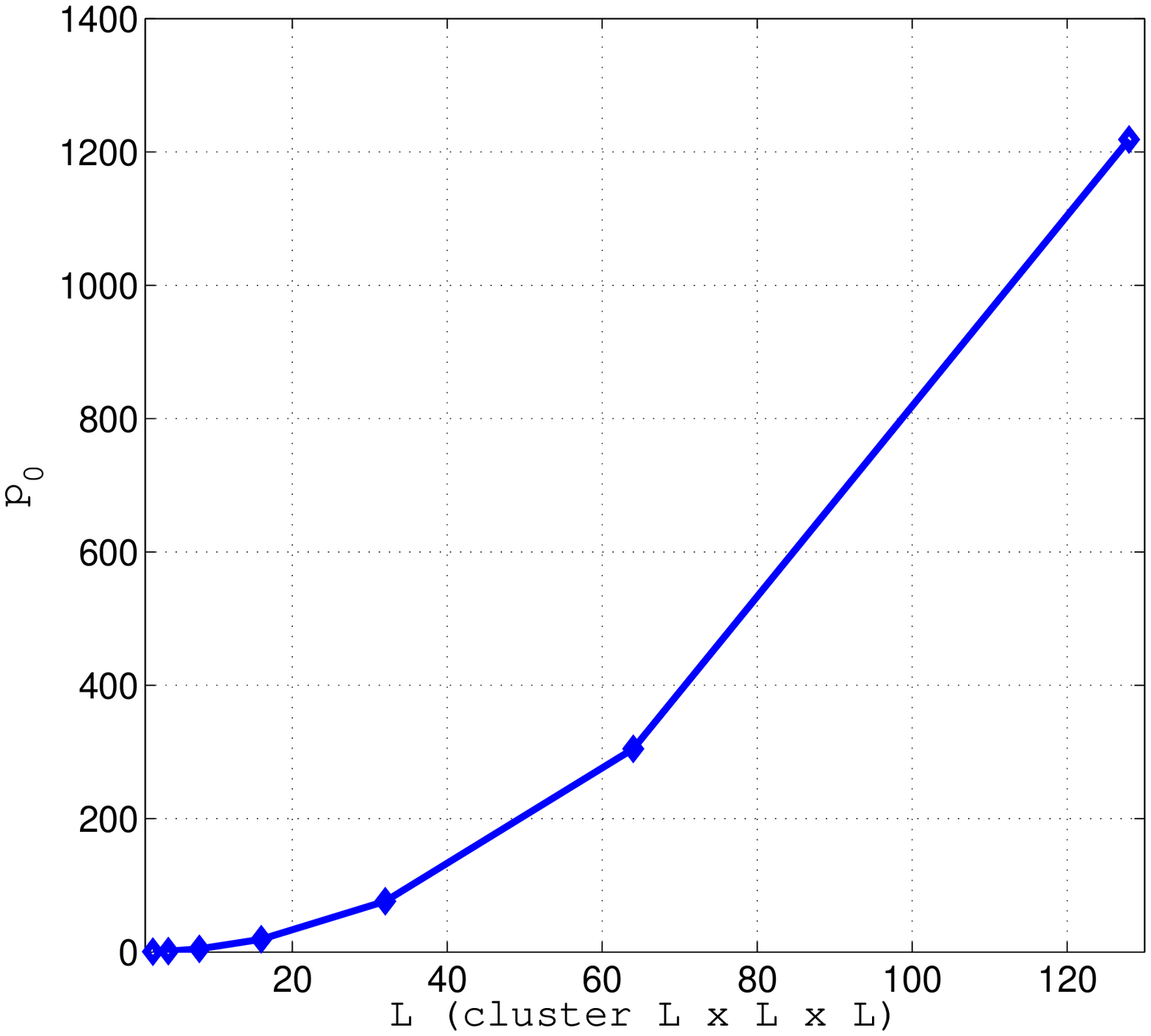}
\caption{Potential sum $p_L$ at the center of the supercell vs. $L$ for $L\times 1\times 1$, $L\times L\times 1$ 
and $L\times L\times L$ lattice sums.}
\label{fig:ConditSum}  
\end{figure}

In the traditional Ewald-type summation techniques the regularization of lattice sums
is implemented by subtraction of the analytically precomputed constants describing the asymptotic behaviour 
in $L$. In our tensor summation method this problem is solved by algebraic approach
by using the Richardson extrapolation techniques applied on a sequence of supercells with increasing size 
$L$, $2L$, $4L$, etc. Denoting the target value
of the potential by $p_L$, the extrapolation formulas for the linear and quadratic behaviour take form 
$$
\widehat{p}_L := 2p_L -p_{2L}, \quad  \mbox{and} \quad \widehat{p}_L := (4p_L -p_{2L})/3,
$$
respectively.
The effect of Richardson extrapolation is illustrated in Figure \ref{fig:CondSum_Regul}.
 
\begin{figure}[htbp]
\centering
\includegraphics[width=5.0cm]{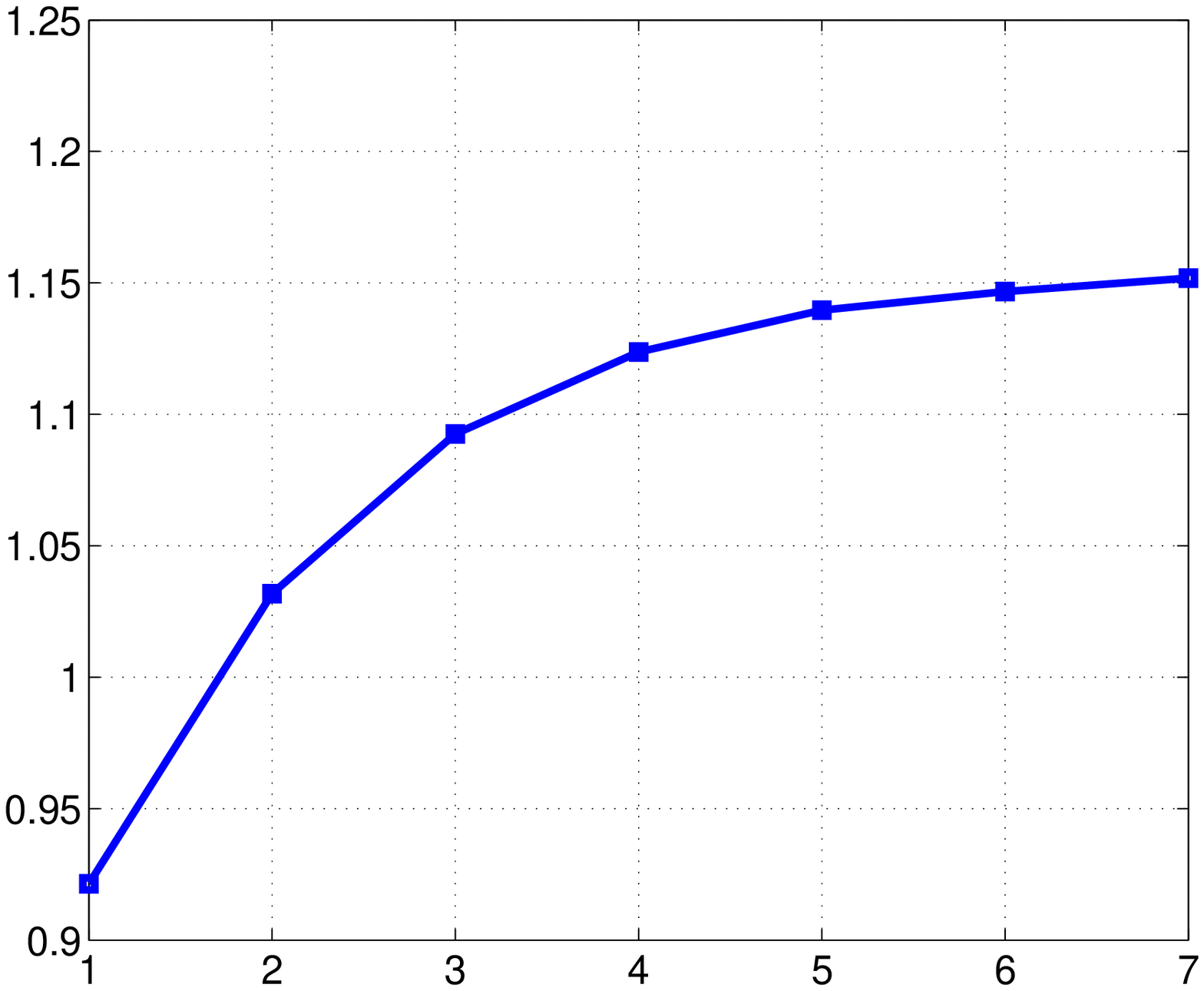}\quad
\includegraphics[width=5.0cm]{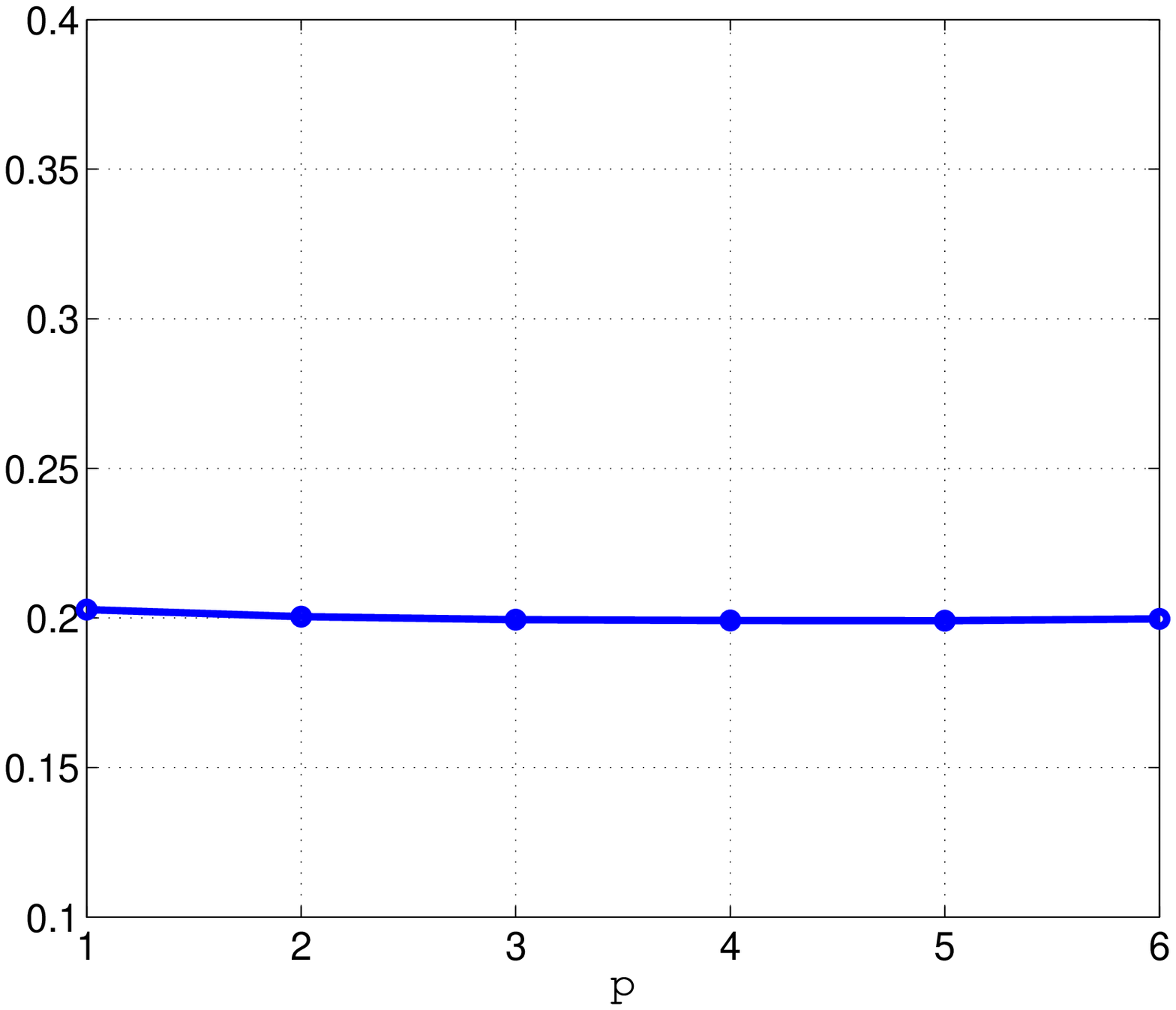}
\caption{Regularized potential sum $\widehat{p}_L$  vs. $m$ with $L=2^m$, for $L\times L\times 1$ 
(left), and $L\times L\times L$ lattice sums (right).
}
\label{fig:CondSum_Regul}  
\end{figure}
Figure \ref{fig:CondSum_Regul} indicates that the potential sum computed at the same 
point as for the previous example (in the case of $L\times L\times 1$ and 
$L\times L\times L$ lattices) 
converges to the limiting values of $\widehat{p}_L$ after application the Richardson 
extrapolation (regularized sum).

\section{ QTT ranks of the assembled canonical vectors in the lattice sum}
\label{sec:QTT-Sum}

Agglomerated canonical vectors in the rank-$R$ tensor 
representation (\ref{eqn:EwaldTensorGl}) are defined over large uniform grid of size $N_L$.
Hence numerical cost for evaluation of each of these $3 R$ vectors 
scales as $O(N_L L)$, which might become too expensive for large $L$ (recall that $N_L=n L$
scales linear in $L$). Using quantics-TT (QTT) approximation \cite{KhQuant:09}, 
this cost can be reduced to 
the logarithmic scale in $N_L$, while the storage need will become $O(\log N_L)$ only.

Our QTT-rank estimates are based on three main ingredients: the global canonical tensor
representation of $1/\|x\|$, $x\in \mathbb{R}^3$, on a supercell \cite{HaKhtens:04I,BeHaKh:08}, 
as in (\ref{eqn:Newt_canTens}), 
QTT approximation to the Gaussian (Proposition \ref{prop:gauss_rank})
and the new result on the block QTT decomposition (Lemma \ref{lem:blockQTT} below).

The next statement presents the QTT-rank estimate 
for Gaussian vector obtained by uniform sampling of $\mathrm{e}^{-\frac{x^2}{2p^2}}$
on the finite interval \cite{DoKhOsel:11}.

\begin{proposition} \label{prop:gauss_rank}
 Suppose uniform grid points $-a = x_0 < x_1 < \cdots < x_{N}=a$, $x_i = -a+hi$, $N=2^{L}$ 
are given on an interval $[-a,a]$, and the vector $G=[g_i]\in \mathbb{R}^N$ is defined by its elements 
$g_i = \mathrm{e}^{-\frac{x_i^2}{2p^2}}$, $i=0,...,N-1$. 
For given $\varepsilon > 0$, assume that $\mathrm{e}^{-\frac{a^2}{2p^2}} \leq {\varepsilon}$.
Then  there exists the QTT approximation $G_r$
of the accuracy $
 ||G - G_r||_\infty \le c \varepsilon,
 $ 
with the ranks bounded by
 $$
 rank_{QTT}(G_r) \le c \log (\frac{p}{\varepsilon}),
 $$
where $c$ does not depend on $a$, $p$, $\varepsilon$ or $N$.
\end{proposition}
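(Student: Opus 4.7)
The plan is to reduce the QTT-rank bound for $G$ to an $\varepsilon$-rank bound for a family of Gaussian coupling matrices obtained by binary splitting of the grid index, and then apply a classical separable-approximation estimate. Concretely, I would first reshape $G$ as a tensor of order $L$ with binary modes by writing $i = \sum_{k=1}^{L} i_k 2^{L-k}$, so that $x_i = -a + h\sum_{k=1}^{L} i_k 2^{L-k}$. For any cut position $m$, split $x_i = u + v$ where $u = h\sum_{k=1}^{m} i_k 2^{L-k}$ depends only on the first $m$ bits and $v = -a + h\sum_{k=m+1}^{L} i_k 2^{L-k}$ only on the remaining $L-m$ bits. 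The Gaussian then factors as
$$
g_i \;=\; e^{-u^2/(2p^2)}\,\cdot\,e^{-v^2/(2p^2)}\,\cdot\,e^{-uv/p^2}.
$$
The first two factors contribute rank one across the $m$-th cut (each depends on only one side of the split), so the $\varepsilon$-rank of the $m$-th unfolding of $G$ equals that of the coupling matrix $K_m = [e^{-uv/p^2}]_{u,v}$, with $u,v$ ranging over subsets of $[-a,a]$.

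The second step is to bound $\operatorname{rank}_\varepsilon(K_m)$ uniformly in $m$. The kernel $e^{-uv/p^2} = \sum_{k\geq 0}(-uv)^k/(k!\,p^{2k})$ is already in separable form, so truncating at order $r$ produces a rank-$r$ approximation with pointwise error controlled on $[-a,a]^2$ by $(a^2/p^2)^r/r!$. The hypothesis $e^{-a^2/(2p^2)} \leq \varepsilon$ forces $a^2/p^2 \leq 2\log(1/\varepsilon)$, so by Stirling a choice $r = O(\log(p/\varepsilon))$ suffices to drive the tail below $\varepsilon$; a sinc-quadrature refinement in the spirit of Section~\ref{ssec:CoulombUnit} absorbs the prefactor cleanly into the logarithm and yields the stated rank, with the constant independent of $a$ and $p$.

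Finally, the QTT cores are assembled by the sequential QTT-SVD (or Vidal's construction) applied to $G$, using the uniform $\varepsilon$-rank bound on every unfolding. The main obstacle is the naive accumulation of truncation errors across the $L = \log_2 N$ successive cuts, which would insert an $L$-factor in the error and hence an unwanted $N$-dependence in the final rank. I would handle this in the standard way: sharpen the per-cut tolerance to $\varepsilon/L$, and observe that $\log L = \log\log N$ is absorbed into the additive constant of $\log(p/\varepsilon)$. Since each TT core obtained after Gram--Schmidt normalization has $\ell^\infty$-norm bounded by $1$, partial contractions are stable, so the aggregated Frobenius truncation error converts into the desired uniform bound $\|G - G_r\|_\infty \leq c\varepsilon$ with $\operatorname{rank}_{QTT}(G_r) \leq c\log(p/\varepsilon)$ and $c$ independent of $a$, $p$, $\varepsilon$, $N$. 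The technically delicate point, which I would verify carefully, is that the constant in the separable approximation of $K_m$ really does not blow up when $a/p$ is as large as $\sqrt{\log(1/\varepsilon)}$ — this is where the precise choice $r \sim \log(p/\varepsilon)$ (rather than $\log(1/\varepsilon)$) becomes necessary.
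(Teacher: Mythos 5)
Your route is genuinely different from the paper's. The paper's proof is essentially a citation: it invokes Lemma 2 and Remark 3 of \cite{DoKhOsel:11}, where $G_r$ is built explicitly as a truncated cosine (Fourier) series of the periodized Gaussian; since each harmonic $\cos(\pi m x/a)$ sampled on a uniform dyadic grid has exact QTT rank $2$, an $r$-term sum has QTT rank at most $2r$ \emph{globally and exactly}, and the only analytic work is the tail estimate showing $r=O(\log(p/\varepsilon))$ terms suffice (the $p$ enters through the prefactor of the Fourier error bound, removed by the substitution $\varepsilon\mapsto\varepsilon/p$). You instead bound the $\varepsilon$-rank of every binary unfolding directly via the factorization $e^{-(u+v)^2/(2p^2)}=e^{-u^2/(2p^2)}e^{-v^2/(2p^2)}e^{-uv/p^2}$ and Taylor truncation of the coupling kernel. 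That core step is sound and more self-contained than the paper's: the envelope is a rank-one matrix across the cut with entries in $(0,1]$, so the Hadamard product preserves both the rank bound and the pointwise error, and $|uv|/p^2\le 2a^2/p^2\le 4\log(1/\varepsilon)$ together with Stirling gives per-cut rank $O(\log(1/\varepsilon))$ — note that in your framework $p$ actually drops out of the rank estimate, so your closing worry about needing $\log(p/\varepsilon)$ rather than $\log(1/\varepsilon)$ is not where the difficulty lies.

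The place where your argument falls short of the statement as written is the assembly step. Unfolding-rank bounds become a single QTT tensor only through TT-SVD, whose truncations accumulate over the $L=\log_2 N$ cuts; your fix (tolerance $\varepsilon/L$ per cut) leaves a residual additive $O(\log L)=O(\log\log N)$ in the rank, so the constant $c$ you obtain is \emph{not} independent of $N$, contrary to the proposition. This is precisely what the paper's construction buys: one global separable approximant whose every term has exact (not approximate) QTT rank $\le 2$ requires no per-cut truncation and hence suffers no accumulation. To repair your version in the same spirit, replace the cut-dependent Taylor expansions by a single expansion of the Gaussian into functions of provably bounded exact QTT rank (e.g.\ the cosine system used in \cite{DoKhOsel:11}); as it stands, the $\log\log N$ term, while harmless in practice, means the proof does not deliver the stated uniformity in $N$.
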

\begin{proof}
The result follows by a combination of Lemma 2 and Remark 3 in \cite{DoKhOsel:11}. In fact,
the condition $\mathrm{e}^{-\frac{a^2}{2p^2}} \leq {\varepsilon}$ implies the relation 
\begin{equation}\label{eqn:GaussInterv}
a \geq a_\varepsilon =\sqrt{2}p \log^{1/2}(1/\varepsilon).
\end{equation}
Combining (\ref{eqn:GaussInterv}) and the rank-$r$ truncated Fourier series representation 
$G_r$ leads to the error bound
\[
 ||G - G_r||_\infty \le c\left(1+ \frac{1}{p}\sqrt{\log \frac{p}{\varepsilon(1+a)}}\right) 
 \varepsilon.
\]
Hence, the result follows by substitution $\varepsilon \mapsto \frac{\varepsilon}{p}$.
\end{proof}

Next Lemma proves the important result that the QTT rank of a weighted sum of 
regularly shifted bumps (see Fig. \ref{fig:Wann_type}) does not exceed the product 
of QTT ranks of the individual sample and the weighting factor.  
\begin{lemma} \label{lem:blockQTT}
 Let $N=2^L$ with the exponent $L=L_1 + L_2$, where $L_1,L_2\geq 1$, and assume that the index 
set $I:=\{1,2,...,N\}$ is split into $n_2=2^{L_2}$ equal non-overlapping subintervals 
$I= \cup_{k=1}^{n_2} I_k$,  each of length $n_1=2^{L_1}$.
Given $n_1$-vector ${\bf x}_0$ that obeys the rank-$r_0$ QTT representation, define 
$N$-vectors ${\bf x}_k$, $k=1,...,L_2$,
\begin{equation} \label{eqn:}
{\bf x}_k(i) = \bigg\{ 
\begin{array}{ll}
\ {\bf x}_0(:) \quad \       \operatorname{for} \,\, i\in I_k \\
\ 0   \quad    \ \ \ \ \ \, \operatorname{for} \,\, i\in I\setminus I_k,
\end{array}
\end{equation} 
and denote ${\bf x} ={\bf x}_1 + ... + {\bf x}_{L_2}$.
Then for any choice of $N$-vector $F$, 
we have
\[
 rank_{QTT}(F \odot {\bf x})\leq  rank_{QTT}(F)\, r_0.
\]
\end{lemma}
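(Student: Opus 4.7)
The plan is to reduce the claim to two ingredients: a low-rank QTT representation of the ``selector + bump'' vector $\mathbf{x}$, and the multiplicativity of QTT ranks under Hadamard products. Assuming the standard convention that QTT reshapes an $N=2^L$ vector into a $2\times\cdots\times 2$ tensor of order $L$, I would first reorder the $L$ bits so that the first $L_1$ bits encode the within-subinterval coordinate (indexing $I_k$ internally) and the last $L_2$ bits encode the subinterval label $k$. Under this ordering the summand $\mathbf{x}_k$ factorizes as $\mathbf{e}_k\otimes\mathbf{x}_0$, where $\mathbf{e}_k\in\mathbb{R}^{n_2}$ is a canonical unit vector, and the sum becomes $\mathbf{x} = \mathbf{s}\otimes \mathbf{x}_0$ with $\mathbf{s}=\sum_k \mathbf{e}_k\in\mathbb{R}^{n_2}$ the indicator vector of the chosen subintervals (in the natural reading this is just $\mathbf{1}_{n_2}$, which has QTT rank $1$).

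Step one is then to bound $\operatorname{rank}_{QTT}(\mathbf{x})\le r_0$. Since $\mathbf{x}$ is a Kronecker product of two factors on disjoint groups of QTT modes, a valid TT decomposition is obtained by concatenating the QTT cores of $\mathbf{s}$ (the first $L_2$ cores, each of rank $1$) with the QTT cores of $\mathbf{x}_0$ (the remaining $L_1$ cores, of rank at most $r_0$), and all interface ranks are bounded by $\max(1,r_0)=r_0$.

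Step two is the QTT Hadamard-product rule: for two length-$N$ vectors $A,B$ with TT cores $G^A_\ell(i_\ell)\in\mathbb{R}^{r^A_{\ell-1}\times r^A_\ell}$ and $G^B_\ell(i_\ell)\in\mathbb{R}^{r^B_{\ell-1}\times r^B_\ell}$, the pointwise product $A\odot B$ admits the explicit TT representation with cores $G^A_\ell(i_\ell)\otimes G^B_\ell(i_\ell)$, hence $\operatorname{rank}_{QTT}(A\odot B)\le \operatorname{rank}_{QTT}(A)\,\operatorname{rank}_{QTT}(B)$. Applying this with $A=F$ and $B=\mathbf{x}$ and combining with step one yields $\operatorname{rank}_{QTT}(F\odot\mathbf{x})\le \operatorname{rank}_{QTT}(F)\cdot r_0$, which is the claim.

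The main obstacle I foresee is not either algebraic step but rather bookkeeping around the mode ordering and the selector $\mathbf{s}$: one must verify that the chosen grouping of bits is the one implicit in the QTT rank of $F$ as well (otherwise a permutation of QTT modes is required, which need not preserve ranks in general), and, if one reads the statement as summing only over the specific subset listed, one must additionally argue that the indicator $\mathbf{s}$ of that subset has QTT rank $1$ (as for $\mathbf{1}_{n_2}$) rather than a larger value. Once the mode alignment and the structure of $\mathbf{s}$ are fixed, the rest of the proof is a direct application of the two standard QTT facts above.
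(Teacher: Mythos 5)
Your proposal is correct and follows essentially the same route as the paper: both identify $\mathbf{x}$ as a Kronecker product of a rank-one all-ones/selector factor on the $L_2$ "subinterval" quantics modes with $\mathbf{x}_0$ on the remaining $L_1$ modes (the paper writes this as $\mathcal{Q}_L(\mathbf{x})=(\otimes_{k=1}^{L_2}\mathbf{1})\otimes\mathcal{Q}_{L_1}(\mathbf{x}_0)$, giving $rank_{QTT}(\mathbf{x})\le r_0$), and then invoke the standard multiplicativity of TT ranks under Hadamard products. Your bookkeeping concerns are harmless here: with the paper's little-endian $q$-coding the within-subinterval bits already come first, so no mode permutation is needed, and the intended reading is that the $\mathbf{x}_k$ cover all subintervals, making the selector the all-ones vector of QTT rank one.
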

\begin{proof}
Since all vectors  ${\bf x}_k$ ($k=1,...,L_2$) have non-intersecting supports, $I_k$, the $L_2$-level
block quantics representation of ${\bf x}$ (see \cite{KhQuant:09}) becomes separable and, 
we obtain the separable decomposition
\[
 {\cal Q}_L({\bf x}_1 + ... + {\bf x}_{L_2})= (\otimes_{k=1}^{L_2} {\bf 1}) \otimes 
 {\cal Q}_{L_1}({\bf x}_0),\quad {\bf 1}=(1,1)^T,
\]
resulting in the rank bound
\[
 rank_{QTT}({\bf x})\leq r_0.
\]
Combining this bound with the standard rank estimate for Hadamard product of tensors
completes the proof.
\end{proof}

\begin{remark} Lemma \ref{lem:blockQTT} provides the constructive algorithm and 
rigorous proof of the low QTT-rank decomposition for certain class of Bloch 
functions \cite{Bloch:1925} and Wannier-type functions.
\end{remark}

\begin{figure}[htbp]
\centering
\includegraphics[width=7.0cm]{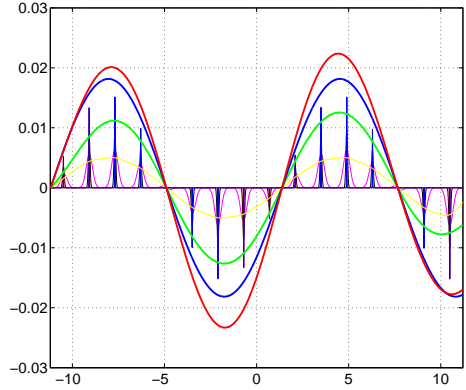} 
\caption{Canonical vectors of the lattice sum modulated by a $\sin$-function.}
\label{fig:Wann_type}  
\end{figure}

Figure \ref{fig:Wann_type} illustrates shapes of the assembled canonical vectors
modulated by a $\sin$-harmonics imitating the construction
of the Wannier-type functions.

Now we are able to estimate QTT ranks of the assembled canonical
vectors representing the lattice sum. In this study, we analyze the canonical decomposition based
on the initial (non-optimized) quadrature (\ref{eqn:Newt_canTens}),
where each term is obtained by sampling of a Gaussian on the uniform 3D grid.
In practice, we apply the optimized quadrature obtained from  the previous one by certain
algebraic rank reduction \cite{BeHaKh:08}. This optimization procedure slightly modifies
the shape of canonical vectors, however, the numerical tests indicate merely the same
QTT ranks as predicted by our theory for the Gaussian-type vectors.

\begin{lemma} \label{lem:rQTTLatSum}
For given tolerance $\varepsilon >0$,
suppose that the set of Gaussian functions $S:=\{g_k= e^{- t_k^2 \|x\|^2} \}$, 
$k=0,1,...,M$, representing canonical vectors in tensor decomposition $\mathbf{P}_R$, 
is specified by parameters in (\ref{eqn:hM}). 
Let us split the set $S$ into two subsets $S=S_{loc}\cup S_{glob}$, such that  
$$
S_{loc}:=\{g_k: a_\varepsilon(g_k) \leq b\}\quad  \mbox{and} \quad S_{glob}=S\setminus S_{loc}.
$$
where $a_\varepsilon(g_k)$ is defined by (\ref{eqn:GaussInterv}).
Then 
the QTT-rank of each canonical vector $v_q$, $q=1,...,R$, in (\ref{eqn:EwaldTensorGl}), 
where $R=M+1$, corresponding to $S_{loc}$ obeys the uniform in $L$ rank bound
\[
 r_{QTT}\leq C \log(1/\varepsilon).
\]
For vectors in $S_{glob}$ we have the rank estimate
\[
 r_{QTT}\leq C \log(L/\varepsilon).
\]
\end{lemma}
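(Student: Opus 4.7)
The strategy is to reduce the bound to Proposition~\ref{prop:gauss_rank} on single Gaussians, combined with the block decomposition of Lemma~\ref{lem:blockQTT}. Each assembled canonical vector in (\ref{eqn:EwaldTensorGl}) is, up to the quadrature weight, the sampling on the uniform grid $\Omega_{N_L}$ of the 1D function
\[
\phi_q(x) = \sum_{k=0}^{L-1} g_q(x-kb), \qquad g_q(x) = e^{-t_q^2 x^2},
\]
so that the splitting $S=S_{loc}\cup S_{glob}$ distinguishes those $q$ for which the $L$ shifted bumps are essentially disjoint on the supercell from those for which they substantially overlap.

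For the local regime $g_q\in S_{loc}$, the hypothesis $a_\varepsilon(g_q)\le b$ says that a single Gaussian has its $\varepsilon$-effective support inside one unit cell of length $n$. Up to a max-norm error of order $\varepsilon$, each shifted copy $\mathcal{W}_{(k)}\mathbf{p}_q^{(1)}$ is therefore supported in the $k$-th cell, and the full sum matches exactly the block vector in Lemma~\ref{lem:blockQTT} with seed $\mathbf{x}_0$ equal to the Gaussian sample on a single cell and with weight $F=\mathbf{1}$ of QTT-rank one. Proposition~\ref{prop:gauss_rank} gives $rank_{QTT}(\mathbf{x}_0)\le C\log(p_q/\varepsilon)$, and $a_\varepsilon(g_q)\le b$ forces $p_q\lesssim b/\sqrt{\log(1/\varepsilon)}$; absorbing $\log b$ into the constant yields the $L$-independent bound $r_{QTT}\le C\log(1/\varepsilon)$.

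For the global regime $g_q\in S_{glob}$ the shifted bumps overlap and Lemma~\ref{lem:blockQTT} no longer applies directly. I would treat $\phi_q$ as a single smooth Gaussian-like object on an interval of length $\sim Lb$ and decompose it via Poisson summation: on $\mathbb{Z}$ the infinite lattice sum of $g_q$ equals $\tfrac{\sqrt{\pi}}{t_q b}\sum_{n\in\mathbb{Z}} e^{-\pi^2 n^2/(t_q b)^2} e^{2\pi i n x/b}$, and since the global regime corresponds to small $t_q b$, only $\mathcal{O}(1)$ Fourier modes survive to accuracy $\varepsilon$; each mode is a complex exponential of QTT-rank two on a uniform grid. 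The finite sum differs from the infinite one by two one-sided boundary tails, each itself a Gaussian-decay object of scale $p_q\lesssim Lb$ whose QTT-rank is at most $C\log(p_q/\varepsilon)\le C\log(L/\varepsilon)$ by Proposition~\ref{prop:gauss_rank}. Assembling the periodic bulk and the two boundary corrections yields the claimed bound.

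The main obstacle is the global case: a naive estimate costs a factor $L$, since one is summing $L$ overlapping Gaussians each already of QTT-rank $\sim\log(p_q/\varepsilon)$. The decisive step is the decomposition above into a nearly constant periodic bulk plus two localized Gaussian tails, together with the observation — stemming from the sinc-quadrature choice in (\ref{eqn:hM}) — that no Gaussian parameter $p_q$ used in representing the Newton kernel on the supercell exceeds its diameter $\sim Lb$, so $\log(p_q/\varepsilon)$ and $\log(L/\varepsilon)$ coincide up to an absorbed constant. The remainder is rank arithmetic and bookkeeping of constants.
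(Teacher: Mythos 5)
Your treatment of $S_{loc}$ coincides with the paper's: the condition $a_\varepsilon(g_q)\le b$ forces $p_q\le C$ uniformly, Proposition~\ref{prop:gauss_rank} bounds the QTT rank of the single-cell Gaussian sample by $C\log(1/\varepsilon)$, and Lemma~\ref{lem:blockQTT} (with $F={\bf 1}$, using that the $\varepsilon$-supports of the shifted copies are disjoint) propagates this bound to the assembled $N_L$-vector independently of $L$. That half is fine.

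For $S_{glob}$ you take a genuinely different route (Poisson summation: $b$-periodic bulk plus two boundary tails), and it contains a concrete gap. The one-sided tails $\sum_{k\le -1}g_q(x-kb)$ and $\sum_{k\ge L}g_q(x-kb)$ are not single Gaussians: when $p_q\sim Lb$ they are sums of $O(L)$ substantially overlapping Gaussians, i.e.\ objects of exactly the same kind as the quantity you set out to bound. Proposition~\ref{prop:gauss_rank} is a statement about the uniform sampling of \emph{one} Gaussian, so the step ``each itself a Gaussian-decay object \dots whose QTT-rank is at most $C\log(p_q/\varepsilon)$ by Proposition~\ref{prop:gauss_rank}'' is unjustified as written; monotone Gaussian-like decay of a vector by itself implies no QTT-rank bound. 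The paper's proof avoids boundary terms entirely: following the proof of Lemma~2 in \cite{DoKhOsel:11}, each shifted Gaussian $g_{q,\ell}(x)=e^{-t_q^2(x-\ell b)^2}$, $\ell=1,\dots,L$, is approximated to accuracy $\varepsilon$ by a truncated cosine series on the \emph{single} interval of length $a=bL$, and a shift by $\ell b$ only re-weights the modes within the span of the same $O(\log(bL/\varepsilon))$ trigonometric functions. Hence all $L$ summands, and therefore their sum, lie in one common low-dimensional trigonometric space, whose QTT rank is bounded by (twice) its dimension, which yields $r_{QTT}\le C\log(L/\varepsilon)$ directly. Your periodic bulk term is salvageable, but the tails require this same common-basis argument (or an equivalent one); as it stands the global estimate is not proved.
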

\begin{proof}
In our notation we have $1/(\sqrt{2}p_k) =t_k=(k \log M)/M$, $k=1,...,M$ ($k=0$ is the trivial case).
We omit the constant factor $\sqrt{2}$ to  obtain $p_k= M/(k \log M)$.

For functions $g_k\in S_{loc}$, the relation (\ref{eqn:GaussInterv}) implies
\[
O(1) =b \geq a_\varepsilon(g_k) = \sqrt{2} p_k \log^{1/2}(1/\varepsilon),
\]
implying the uniform bound $p_k \leq C$, and then the rank estimate $r_{QTT}\leq C \log(1/\varepsilon)$
in view of Proposition \ref{prop:gauss_rank}.
Now we apply Lemma \ref{lem:blockQTT} to obtain the uniform in $L$ rank bound. 

For globally supported functions in $S_{glob}$ we have 
$bL \geq a_\varepsilon \simeq p_k \log^{1/2}(1/\varepsilon) \geq b $, 
hence we will consider all these function on the maximal support of the size of supercell,  $bL$,
and set $a=bL$.
Using the trigonometric representation as in the proof of Lemma 2 in \cite{DoKhOsel:11},
we conclude that for each fixed $k$ the shifted Gaussians, 
$g_{k,\ell}(x)= e^{- t_k^2 \|x- \ell b\|^2}$ ($\ell=1,...,L$), 
can be approximated by shifted trigonometric series 
$$
 G_r(x-b\ell) = \sum\limits_{m=0}^M C_m p\mathrm{e}^{-\frac{\pi^2 m^2 p^2}{2a^2}} 
\cos\left(\dfrac{\pi m (x-b\ell)}{a}\right),\quad a=bL,
 $$
which all have the common trigonometric  basis containing about 
$rank_{QTT}(G_r)=O(\log (\frac{p_k}{\varepsilon}))= O(\log (\frac{bL}{\varepsilon}))$ terms. Hence the sum 
of shifted Gaussian vectors over
$L$ unit cells will be approximated with the same QTT-rank bound as each individual 
term in this sum, which proves the assertion. 
\end{proof}

\begin{figure}[htbp]
\centering
\includegraphics[width=7.6cm]{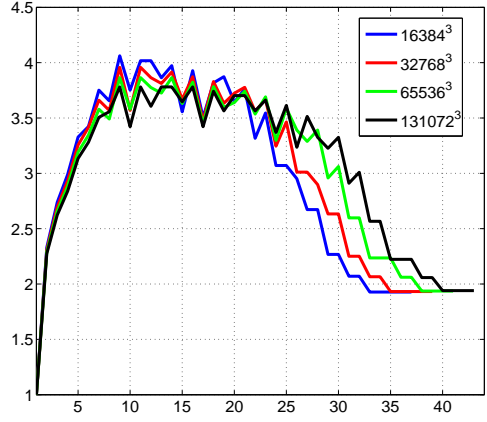}  
\caption{ QTT-ranks of the canonical vectors of a single 3D Newton kernel
discretized on a cubic grids of size $n^3=16384^3,\, 32768^3,\, 65536^3$ and $131072^3$.
}
\label{fig:QTTRanks_Newton}  
\end{figure}
Based on the previous statements, we arrive at the following result.
\begin{theorem}\label{thm:sumQTT}
 The projected tensor of $v_{c_L} $ for the full sum over a single  charge 
can be presented by the rank-$R$  QTT-canonical tensor 
\begin{equation}\label{eqn:EwaldTensorQTT}
 {\bf P}_{c_L}=  \sum\limits_{q=1}^{R}
({\cal Q}\sum\limits_{k_1=1}^L{\cal W}_{\nu({k_1})} {\bf p}^{(1)}_{q}) \otimes 
({\cal Q}\sum\limits_{k_2=1}^L {\cal W}_{\nu({k_2})} {\bf p}^{(2)}_{q}) \otimes 
({\cal Q}\sum\limits_{k_3=1}^L{\cal W}_{\nu({k_3})} {\bf p}^{(3)}_{q}),
\end{equation}
where the QTT-rank of each canonical vector is bounded by 
$r_{QTT}\leq C \log(L/\varepsilon) $.
The computational cost is estimated by $O(R  L r_{QTT}^3)$, 
while the storage size scales as $O(R \log^2(L/\varepsilon))$.
\end{theorem}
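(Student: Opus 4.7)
The plan is to assemble the statement by composing the rank-$R$ canonical decomposition of Theorem \ref{thm:sumCaseE} with a termwise QTT compression of each assembled canonical vector, and then to control the cost via the single-vector rank bound of Lemma \ref{lem:rQTTLatSum}. First I would start from the rank-$R$ canonical representation (\ref{eqn:EwaldTensorGl}), noting that the tensor-product structure is preserved under any linear contraction acting in a single spatial mode. In particular, replacing each factor $\sum_{k_\ell=0}^{L-1}{\cal W}_{({k_\ell})}{\bf p}^{(\ell)}_{q}$ by its QTT-compressed image ${\cal Q}\sum_{k_\ell=0}^{L-1}{\cal W}_{({k_\ell})}{\bf p}^{(\ell)}_{q}$ yields exactly the formula (\ref{eqn:EwaldTensorQTT}) and keeps the canonical rank at $R$, so the structural part of the claim is immediate.

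Second, I would estimate the QTT rank of each of the $3R$ assembled vectors. Since the canonical vectors ${\bf p}^{(\ell)}_q$ originate from the sinc-quadrature approximation (\ref{eqn:sinc_Newt})--(\ref{eqn:hM}) to the Laplace--Gauss representation of $1/\|x\|$, each one is (up to the quadrature weight $g_k$) a sample of a Gaussian $e^{-t_k^2 x^2}$ on the 1D grid of size $N_L$. Lemma \ref{lem:rQTTLatSum} separates the indices into $S_{loc}$ and $S_{glob}$: for $g_k\in S_{loc}$ the effective support fits in a single cell, so Lemma \ref{lem:blockQTT} applied with $L_2=\log_2 L$ and $n_1=n$ gives $r_{QTT}\le C\log(1/\varepsilon)$ uniformly in $L$; for $g_k\in S_{glob}$ the shifted Gaussians share a common trigonometric basis of size $O(\log(L/\varepsilon))$, producing the rank bound $C\log(L/\varepsilon)$. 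Taking the maximum over both classes yields the uniform bound $r_{QTT}\le C\log(L/\varepsilon)$ claimed in the theorem.

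Third, I would collect the resource estimates. A QTT representation of a vector of length $N_L=nL$ with rank $r_{QTT}$ has storage of order $r_{QTT}^2\log N_L$; since $n$ is fixed and $\log N_L=O(\log L)$ is absorbed into $\log(L/\varepsilon)$, the storage for the $3R$ assembled vectors is $O(R\log^2(L/\varepsilon))$ as stated. For the computational cost I would form each assembled QTT vector by summing the $L$ shifted copies directly in QTT format: each shift acts as a block permutation with QTT-rank structure of the indicator of $I_k$, and the standard cost of a binary QTT addition of two vectors of rank at most $r_{QTT}$ (including rounding) is $O(r_{QTT}^3\log N_L)$. Accumulating over $L$ terms and $R$ canonical components gives the overall bound $O(R L r_{QTT}^3)$, again absorbing the logarithmic factor in the grid size.

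The main obstacle I expect is the second step: ensuring that the block-wise argument of Lemma \ref{lem:blockQTT} genuinely applies to the localized Gaussians (i.e.\ that their numerical support is contained in a single cell up to the tolerance $\varepsilon$), and simultaneously that the shared-trigonometric-basis argument for $S_{glob}$ is uniform in the shift $\ell$, so that summation over $\ell=1,\dots,L$ does not inflate the rank. Once these two rank bounds are checked to hold consistently across the split $S=S_{loc}\cup S_{glob}$ induced by (\ref{eqn:hM}), the remaining work is bookkeeping of QTT arithmetic costs.
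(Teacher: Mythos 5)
Your proposal is correct and follows essentially the same route as the paper, which proves the theorem simply by combining the assembled rank-$R$ canonical representation of Theorem \ref{thm:sumCaseE} with the QTT-rank bounds of Lemma \ref{lem:rQTTLatSum} (itself resting on Proposition \ref{prop:gauss_rank} and Lemma \ref{lem:blockQTT}) and standard QTT arithmetic cost bookkeeping. The only remark is that your storage accounting ($r_{QTT}^2\log N_L$ per vector) is, if anything, more careful than the paper's stated $O(R\log^2(L/\varepsilon))$, so the ``absorption'' of the $\log N_L$ factor is a shared looseness with the theorem statement rather than a gap in your argument.
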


Figure \ref{fig:QTTRanks_Newton} represents QTT-ranks of the canonical vectors of 
a single 3D Newton kernel discretized on a large cubic grids. 

Figure \ref{fig:QTTRanks_Ewald} demonstrates that the average QTT ranks of the 
assembled canonical vectors
for $k=1,...,R$, scale logarithmically both in $L$ and in the total grid-size $n=N_L$. 
\begin{figure}[htbp]
\centering
\includegraphics[width=6.0cm]{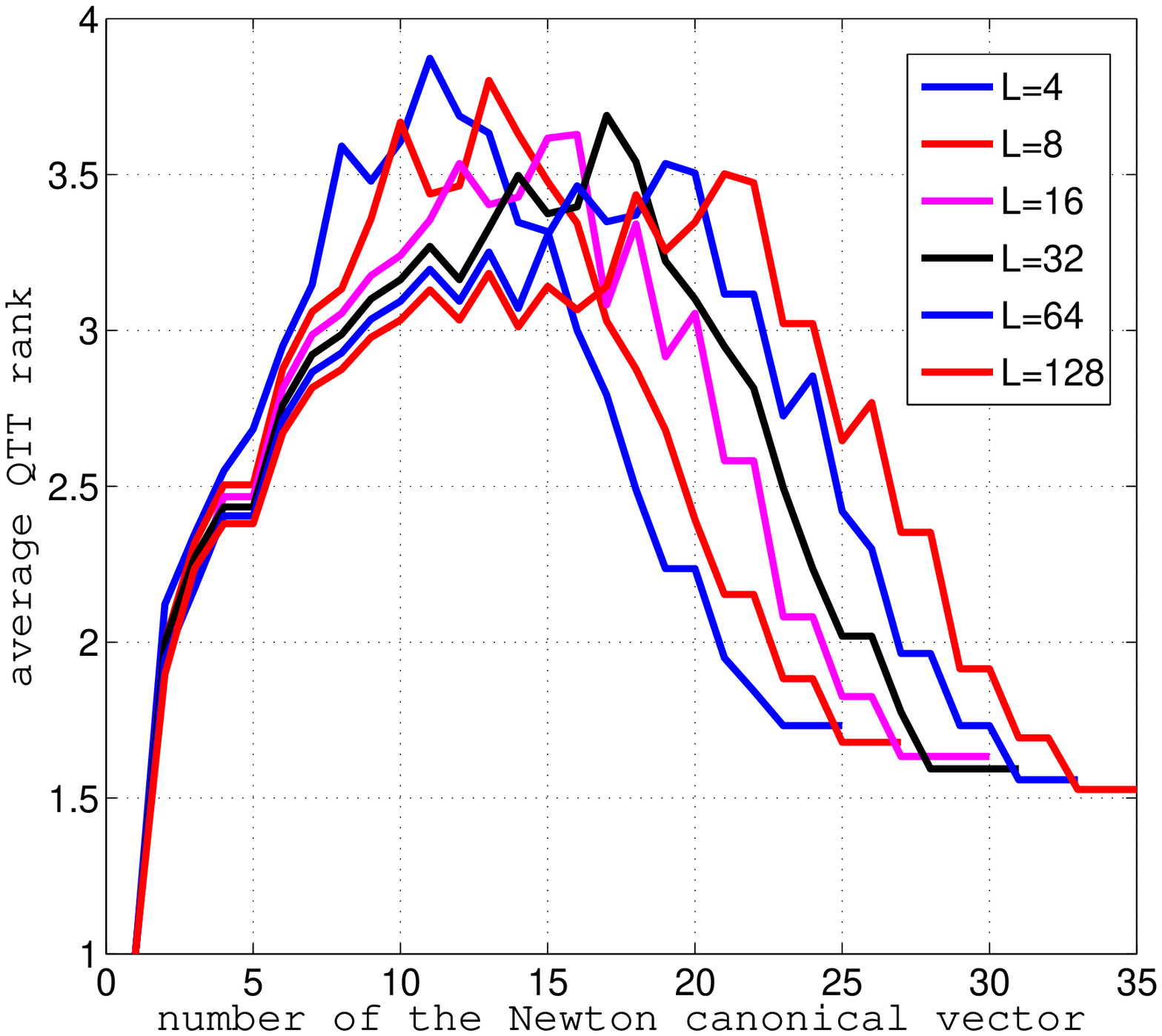} \quad \quad
\includegraphics[width=5.7cm]{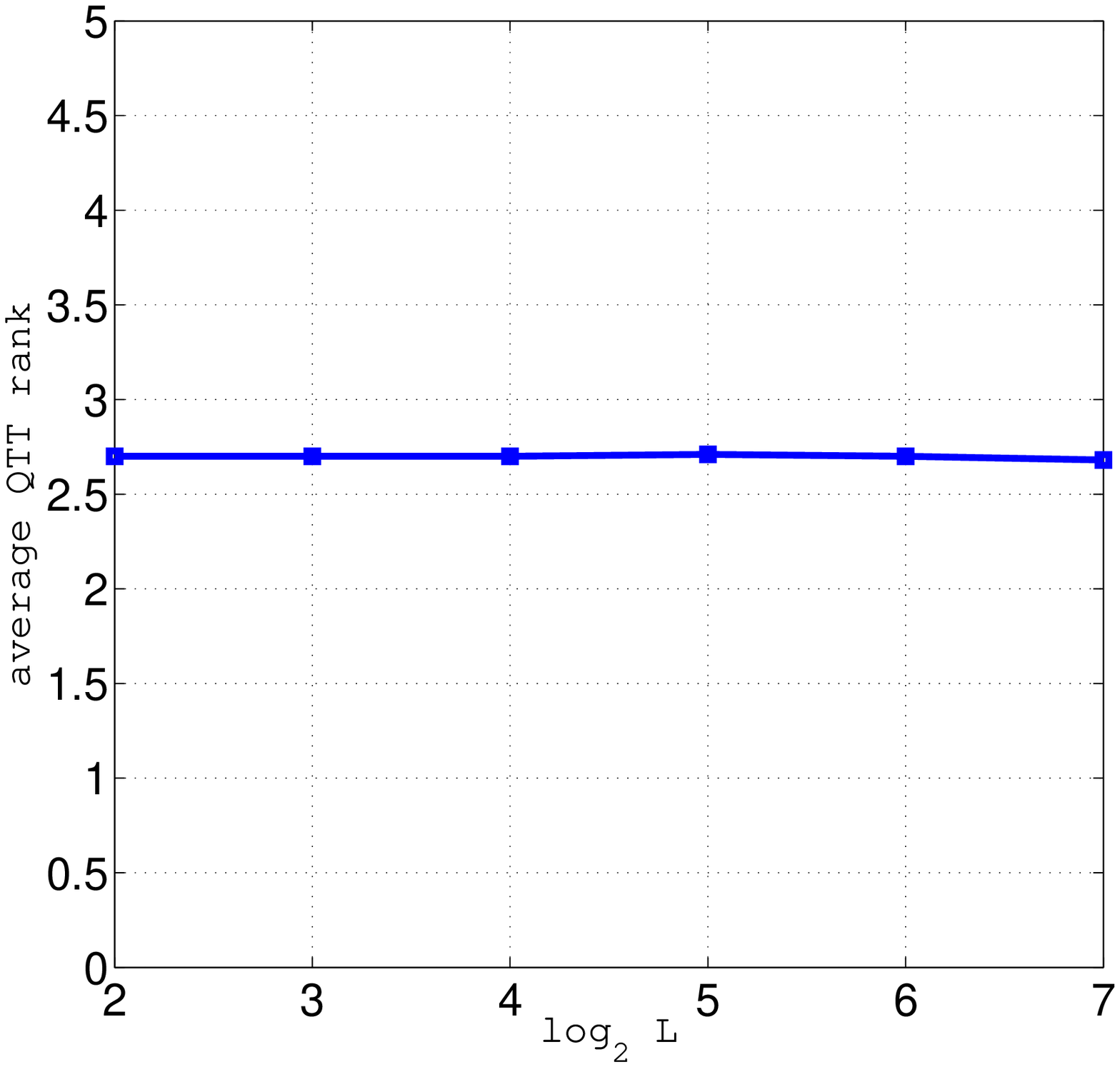}
\caption{Left: QTT ranks of the assembled canonical vectors vs. $L$ for fixed grid 
size $N^3=16384^3$. Right: Average QTT-ranks over $R$ canonical vectors 
vs. $\log L$ for 3D  evaluation of the $L\times 1\times 1$ chain of Hydrogen atoms 
on $N \times N\times N$ grids, $N=2048,\, 4096,\, 8192$, $16384$.
}
\label{fig:QTTRanks_Ewald}  
\end{figure}

\section{Conclusions}\label{sec:Conclusions}

We introduce the method of assembled rank-structured calculation of 
large $L\times L \times L$ lattice sums of potentials 
 discretized on $N\times N \times N$ 3D Cartesian grid 
in a box and for the supercell in periodic boundary conditions.

Advantages of the tensor approach applied to the lattice summation problem
are achieved due to combination of two basic concepts. First, 
we apply the low-rank separable tensor-product representation  
of a single Newton kernel discretized on a fine $N\times N \times N$ spacial 
grid, that can be used as the master tensor for the translation along any lattice
vector. Second, the global 3D product-type geometry in the location of interaction potentials
enables us to employ the assembled summation of shifted low-rank canonical tensors.
The latter allows to adapt tensor calculus to reduce the 3D summation to 1D sums 
of $N$-vectors. 

The total lattice sum of shifted potentials in a  box is proven to preserve 
the same canonical rank as that for a single Newton kernel. 

For the case of lattice sums in a box our approach exhibits   linear scaling 
in $L$, for both computational work and storage size, reducing dramatically
the numerical costs compared with other summation methods.
For example, computation of a sum of $10^6$ electrostatic potentials 
of Hydrogen nuclei in a box takes about $2$ seconds, when using our algorithms
implementation in Matlab  on a terminal of 8 AMD Opteron cluster (see Table \ref{Table_timesL}). 
Comparison of the  direct canonical sums of the electrostatic potentials 
and the assembled lattice tensor summation demonstrates the accuracy 
at the level of machine precision, $10^{-14}$.

In the periodic setting, the storage size is uniformly bounded in $L$. 

For both models, we prove that QTT approximation
method reduces the complexity to logarithmic scaling in the total grid size, $O(\log N)$. 
This suggests the efficient approach to numerical simulations on large 
$L\times L \times L$ lattices since the limitations on the spacial grid-size are essentially 
relaxed.

It is worth to note that the sum of electrostatic potentials is calculated in a whole 
computational box/supercell  in a convenient structured form, which is suitable for further  
numerical treatment of the 3D quantities involved by using tensor methods in 1D complexity, 
for example, integration,  differentiation, multiplication with a function, etc. 

This approach can be also applied to a wide class of commonly used chemical potentials, 
in particular, to Coulomb-type, Yukawa, Helmholtz, Slater, Stokeslet, 
Lennard-Jones or van der Waals interactions. 
In all these cases the low-rank tensor decomposition
can be proved to exist and can be constructed by the analytic-algebraic methods 
as in the case of Newton kernel.

\section{Appendix: Basics of rank-structured tensor formats and operations}\label{sec:Append}

Separable representation of the multidimensional arrays in the Tucker and canonical tensor formats, 
were since long known in the computer science community \cite{Kolda}, where they were
 mostly used in  processing of the multidimensional experimental data in chemometrics, 
psychometrics and in  signal processing.
The remarkable approximating properties of the Tucker and canonical decomposition 
for wide classes of function related tensors  were revealed in \cite{Khor:06,KhKh:06}, 
promoting its usage as a tool for the numerical treatment of the multidimensional problems in 
numerical analysis. An introductory description of tensor formats for 
function related tensors and tensor-structured numerical methods for calculation of multidimensional
functions and operators is presented in \cite{VeKh_Diss:10,KhorSurv:10}.

A tensor is a multidimensional array given by a $d$-tuple index set,
\[
{\bf A}=[a_{i_1,...,i_d}] \in \mathbb{R}^{n_1 \times \ldots \times n_d},
\quad i_\ell \in \{ 1,\ldots ,n_\ell\}.
\]
It is an element of a linear vector space equipped with the Euclidean scalar product.
For tensor with equal sizes $n_\ell =n$, $\ell=1,\ldots d$, the required storage 
is $n^{\otimes d}$. 
To get rid of the exponential growth of the tensor with the dimension $d$, one can
employ the rank-structured representations of the multidimensional arrays.

As a building block for such representation we use a rank-$1$ tensor, which is a tensor product
of vectors in each dimension,
\[
{\bf A} = {\bf a}^{(1)}\otimes ... \otimes {\bf a}^{(d)}\in \mathbb{R}^{n_1 \times \ldots \times n_d}
\]
with entries $ a_{i_1,\ldots i_d}=a^{(1)}_{i_1}\cdot \cdot\cdot a^{(d)}_{i_d}$.
 
Taking a sum of $R$  rank-$1$ tensors with some weights $c_k$ one comes to the 
canonical rank-$R$ representation,
 \begin{equation}\label{CP_form}
{\bf A} = \sum\limits_{i =1}^{R} c_i {\bf a}_i^{(1)}  \otimes \ldots \otimes {\bf a}_i^{(d)},
 \quad  c_i \in \mathbb{R},
\end{equation}
where ${\bf a}_i^{(\ell)}$ are normalized vectors. The tensor in the canonical format 
requires storage $O(d n R)$.  Sometimes ${\bf a}_i^{(\ell)}$, 
$\ell=1,2,\; \ldots ,d$, are called the ''skeleton`` vectors of the canonical rank-$R$
tensor representation, while the matrices $A^{(\ell)} =[{\bf a}_1^{(\ell)}, \ldots ,{\bf a}_R^{(\ell)}]$ 
obtained by sticking together all vectors of the same mode ${\bf a}_i^{(\ell)}$, ($i=1,2,\ldots, R$) 
are called ''factor matrices'' of the canonical tensor.

The Tucker decomposition is constructed using the orthogonal projection of the original
tensor by the orthogonal matrices. It is also a sum of the tensor products,
\[
  {\bf A} ={\sum}_{\nu_1 =1}^{r_1}\ldots
{\sum}^{r_d}_{{\nu_d}=1} \beta_{\nu_1, \ldots ,\nu_d}
\,  {\bf a}^{(1)}_{\nu_1} \otimes \ldots \otimes {\bf a}^{(d)}_{\nu_d},\quad \ell=1,\ldots,d ,
\]
where ${\bf r}=(r_1,...,r_d)$ is the Tucker rank, $\boldsymbol{\beta}=
[\beta_{\nu_1,...,\nu_d}]$ is the core
tensor, and the set of orthonormal vectors ${\bf a}^{(\ell)}_{\nu_\ell}\in \mathbb{R}^{n_\ell}$,
form the orthogonal matrices of the Tucker projection.

The rank-structured tensor representation provides 1D complexity of multilinear operations 
with multidimensional tensors. 
Rank-structured tensor representation  provides fast multi-linear algebra with linear 
complexity scaling in the dimension $d$. 

For given canonical tensors ${\bf A}$ and ${\bf B}$ 
with the ranks $R_a$ and $R_b$, respectively, their Euclidean scalar product can be computed by
\begin{equation}\label{scal}
\left\langle {\bf A}, {\bf B} \right\rangle:=
\sum\limits_{i=1}^{R_a} \sum\limits_{j=1}^{R_b}
c_i c_j \prod\limits_{\ell=1}^d \left\langle  {\bf a}_i^{(\ell)},  {\bf b}_j^{(\ell)} \right\rangle,
\end{equation}
at the expense $O(d n R_a R_b)$.
The Hadamard product of tensors ${\bf A}$, ${\bf B}$ given in the
form (\ref{CP_form}) is calculated in $O( d n R_a R_b )$ operations by 1D point-wise
products of vectors, 
\begin{equation}\label{had}
{\bf A}  \odot {\bf B}     :=
\sum\limits_{i=1}^{R_a} \sum\limits_{j=1}^{R_b}
c_i c_j \left(  {\bf a}_i^{(1)} \odot  {\bf b}_j^{(1)} \right) \otimes \ldots \otimes
\left(  {\bf a}_i^{(d)} \odot  {\bf b}_j^{(d)} \right).
\end{equation}
Summation of two tensors in the canonical format ${\bf C} ={\bf A}  + {\bf B}$ is performed 
by a simple concatenation of their factor matrices, 
$A^{(\ell)} =[{\bf a}_1^{(\ell)}, \ldots ,{\bf a}_{R_a}^{(\ell)}]$ and
$B^{(\ell)} =[{\bf b}_1^{(\ell)}, \ldots ,{\bf b}_{R_b}^{(\ell)}]$,
\begin{equation}\label{eqn:conc}
C^{(\ell)} =[{\bf a}_1^{(\ell)}, \ldots ,{\bf a}_{R_a}^{(\ell)},{\bf b}_1^{(\ell)}, \ldots ,
{\bf b}_{R_b}^{(\ell)}].
\end{equation} 
The rank of the resulting canonical tensor is $R_c =R_a + R_b$.

In electronic structure calculations, the 3D convolution transform with the 
Newton kernel, $\frac{1}{\|x-y\|}$, is the most computationally expensive operation.
The tensor method to compute  convolution over large $n\times n\times n$  Cartesian grids
in $O(n\log n)$ complexity was introduced in \cite{Khor1:08}.\\
Given canonical tensors ${\bf A}$, ${\bf B}$ in a form (\ref{CP_form}),  
their convolution product  is represented by the sum of tensor products of $1D$ convolutions, 
\begin{equation}\label{conv}
 {\bf A}  \ast {\bf B} = \sum\limits_{i=1}^{R_a}
\sum\limits_{ j= 1}^{R_b}  c_i c_j \left(  {\bf a}_i^{(1)} \ast {\bf b}_j^{(1)} \right) 
\otimes \left(  {\bf a}_i^{(2)} \ast {\bf b}_j^{(2)} \right) 
 \otimes \left(  {\bf a}_i^{(3)} \ast {\bf b}_j^{(3)} \right) ,
\end{equation}
where ${\bf a}_k^{(\ell)} \ast {\bf b}_m^{(\ell)}$ is the convolution product of $n$-vectors.
The cost of tensor convolution in both storage and time is estimated by 
$O(R_a R_b n \log n)$. It  considerably outperforms the 
conventional  3D FFT-based algorithm of complexity $O(n^3 \log n)$ \cite{KhKh3:08}.

In tensor-structured numerical methods the calculation of the $3D$  convolution integrals  is
replaced by a sequence of $1D$  scalar and Hadamard products, and $1D$ convolution 
transforms \cite{KhKh3:08,VeKh_Diss:10}. However, the sequences of rank-structured 
operations lead to increasing of tensor ranks since they are multiplied. 
For rank reduction, for example, 
the canonical-to-Tucker and Tucker-to-canonical algorithms can be used 
\cite{KhKh:06,KhKh3:08,VeKh_Diss:10}.
 
The matrix-product states (MPS) decomposition is since long used 
 in quantum chemistry and quantum information theory \cite{White:93,Cirac_TC:04}.
The particular case of MPS representation is called a tensor train (TT) format \cite{Osel_TT:11}.
Any entry of a $d$th order tensor in this format is given by
\begin{equation}\label{eq:tt}
 a(i_1,i_2,\ldots,i_d) = A^{(1)}_{i_1} A^{(2)}_{i_2} \ldots A^{(d)}_{i_d}, 
\end{equation}
where each $A^{(k)}_{i_k}=A^{(k)}(\alpha_{k-1},i_k,\alpha_k)$ is $r_{k-1} \times r_k$ 
matrix depending on $i_k$ with the convention $r_0=r_d=1$.
Storage size for $n^{\otimes d}$ TT tensor is bounded by $O(d r^2 n)$, $r=\max {r_k}$.
The algebraic operations on TT tensors can be  
implemented with linear complexity scaling in $n$ and $d$. 

In 2009 the quantics-TT (QTT) tensor approximation method 
was introduced\footnote{B.N. Khoromskij. \emph{ $O(d\log N)$-Quantics Approximation
of $N$-$d$ Tensors in High-Dimensional Numerical Modeling.}
Preprint 55/2009, Max-Planck Institute for Mathematics in the Sciences, Leipzig 2009;
http://www.mis.mpg.de/publications/preprints/2009/prepr2009-55.html.}  
and rigorously proved to provide logarithmic scaling in storage for
a wide class of function generated vectors and multidimensional tensors, 
see also  \cite{KhQuant:09}.
In particular, the QTT representation of function-related vectors of size 
$N=q^L$, ($q=2,3,...$) needs only
\[
q\cdot L \cdot r^2 \ll q^L
\]
numbers to store, where $r$ is the QTT-rank of $q\times q \times ... \times q$ tensor 
of order $L$,
reshaped from the initial vector by $q$-adic folding  \cite{KhQuant:09}.
For example, the $N$-vector ${\bf x}=[x_i]$ of size $N=q^L$ is reshaped  to its quantics image
in $\mathbb{Q}_L :=\bigotimes_{\ell=1}^L \mathbb{R}^q$ via $q$-coding,
\[
 i-1=\sum_{\ell=1}^L (j_\ell -1)q^{\ell-1}, \quad j_\ell \in\{1,2, ...,q\}.
\]
Though the optimal choice is shown to be $q=2$ or $q=3$, the numerical implementations 
are usually performed with $q=2$ (binary coding).

In \cite{KhQuant:09} it was proven that the rank parameter $r$ in the QTT approximation  
is a small constant for a wide class of functions discretized on the uniform grid.
For example, $r=1$ for complex exponents, 
$r=2$ for trigonometric functions and  for Chebyshev polynomials 
(sampled on Chebyshev-Gauss-Lobatto grid), and
$r\leq m+1$ for polynomials of degree $m$ (see also \cite{Gras:10}). 
These properties are extended to various combinations of above functions.

The QTT approximation method enables  the  multidimensional vector transforms
with logarithmic complexity scaling, $O(\log N)$. For example, we mention
the superfast FFT \cite{DoKhSav:11}, Laplacian inverse \cite{KazKhor_1:10} 
and wavelet \cite{KhMiao:13} transforms.

\begin{footnotesize}

\end{footnotesize}

\end{document}